\documentclass{amsart}

\usepackage{amssymb,stmaryrd,mathrsfs,dsfont,amsmath}

\theoremstyle{plain}
\newtheorem{theorem}{Theorem}[section]
\newtheorem{lemma}[theorem]{Lemma}
\newtheorem{proposition}[theorem]{Proposition}
\newtheorem{corollary}[theorem]{Corollary}

\theoremstyle{definition}

\newtheorem{definition}[theorem]{Definition}

\theoremstyle{remark}
\newtheorem{remark}[theorem]{Remark}

\input xy
\xyoption{all}


\begin{document}

\title[Semigroups of Transformations that preserve a partition]
{On certain Semigroups of Transformations that preserve a partition}

\author[Mosarof Sarkar]{\bfseries Mosarof Sarkar}
\address{Department of Mathematics, Central University of South Bihar, Gaya,Bihar, India}
\email{mosarofsarkar@cusb.ac.in}

\author[Shubh N. Singh]{\bfseries Shubh N. Singh}
\address{Department of Mathematics, Central University of South Bihar, Gaya, Bihar, India}
\email{shubh@cub.ac.in}


\begin{abstract}

Let $X$ be a nonempty set, and let $\mathcal{T}_X$ be the full transformation semigroup on $X$. For a partition $\mathcal{P} = \{X_i \;|\; i\in I\}$ of $X$, we consider the semigroup $T(X, \mathcal{P}) = \{f\in \mathcal{T}_X\;|\; \forall X_i\;\exists X_j,\; X_i f \subseteq X_j\}$, the subsemigroup $\Sigma(X, \mathcal{P}) = \{f\in T(X, \mathcal{P})\;|\; Xf \cap X_i \neq \emptyset\; \forall X_i\}$, and the group of units $S(X, \mathcal{P})$ of $T(X, \mathcal{P})$. In this paper, we first characterize the elements of $\Sigma(X, \mathcal{P})$. For a permutation $f$ of finite $X$, we next observe whether there exists a nontrivial partition $\mathcal{P}$ of $X$ such that $f\in S(X, \mathcal{P})$. We then characterize and enumerate the idempotents in the semigroup $\Sigma(X, \mathcal{P})$ for arbitrary and finite $X$, respectively. We also characterize the elements of $S(X, \mathcal{P})$. For finite $X$, we finally calculate the cardinality of $T(X, \mathcal{P})$, $\Sigma(X, \mathcal{P})$, and $S(X, \mathcal{P})$.
\end{abstract}

\subjclass[2010]{20M15; 20M20.}

\keywords{Transformation semigroups; Permutation groups; Partitions; Idempotents.}

\maketitle

\section{Introduction}
We assume that the reader is familiar with elementary concepts of combinatorics and semigroup theory. Throughout the paper, let $X$ denote a set with more than two elements, and let $\mathcal{P}$ denote a partition of $X$. The symbols $\mathcal{T}_X$ and $\mathcal{S}_X$ will be used to denote the full transformation semigroup and the symmetric group on $X$, respectively. For a subset $A\subseteq X$, we denote by $Af$ the image of $A$ under $f \in \mathcal{T}_X$. A map $f\in \mathcal{T}_X$ \emph{preserves} a partition $\mathcal{P}$ if for every $X_i \in \mathcal{P}$, there exists $X_j \in \mathcal{P}$ such that $X_i f \subseteq X_j$.

\vspace{0.2cm}
The notion of semigroups of transformations that preserve a partition is one of the most trending topics of semigroup theory. For a partition $\mathcal{P}$ of a set $X$, Pei \cite{pei-s94} introduced and studied the subsemigroup
\[T(X, \mathcal{P}) = \{f\in \mathcal{T}_X\;|\; \forall X_i \in \mathcal{P}, \; \exists X_j \in \mathcal{P}, \;\; X_i f \subseteq X_j\}\]
of $\mathcal{T}_X$. Moreover, Pei proved in \cite[Theorem 2.8]{pei-s94} that $T(X, \mathcal{P})$ is exactly the semigroup of all continuous selfmaps on $X$ endowed with the topology having $\mathcal{P}$ as a basis. Since then, $T(X, \mathcal{P})$ and its subsemigroups have received considerable attention and their several fascinating algebraic and combinatorial aspects have been investigated (see for example \cite{arjo15, arjo-c04, arjo09, east-c16, east16, fernand11, fernand14, pei-s98, pei-s05, pei-c05, sun-s07, sun-jaa13}). There have also been a number of interesting works on certain generalizations of the semigroup $T(X, \mathcal{P})$ (see for example \cite{deng11, deng12, peidin-s05, pei-c07, puri16}).

\vspace{0.2cm}

Pei \cite{pei-c05} studied the regularity and Green's relations in the semigroup $T(X, \mathcal{P})$. When $\mathcal{P}$ is a uniform partition of finite $X$, Pei \cite{pei-s05} gave an upper bound for the rank of $T(X, \mathcal{P})$. Later, Ara\'{u}jo et al. \cite{arjo09} calculated the rank of $T(X, \mathcal{P})$ and thus settled a conjecture on the rank of $T(X, \mathcal{P})$ posed by Pei in \cite{pei-s05}. Ara\'{u}jo et al. \cite{arjo15} also calculated the rank of $T(X, \mathcal{P})$ for an arbitrary partition $\mathcal{P}$ of finite $X$. Dolinka et al. characterized as well as enumerated the idempotents of the semigroup $T(X, \mathcal{P})$ for finite set $X$ in \cite{east-c16} and \cite{east16} for the uniform and non-uniform cases, respectively. The cardinality of particular classes of subsemigroups of the semigroup $T(X, \mathcal{P})$ have also been calculated (see for example \cite{fernand12, sun-13}).

\vspace{0.2cm}

Pei \cite{pei-s05} also considered the group of units $S(X, \mathcal{P})$ of the semigroup $T(X, \mathcal{P})$ and observed that $S(X, \mathcal{P})$ is exactly the subgroup of all homeomorphisms on $X$ endowed with the topology having $\mathcal{P}$ as a basis, and called it the \emph{homeomorphism group}. For a uniform partition $\mathcal{P}$ of finite $X$, Pei \cite{pei-s05} further deduced an upper bound for the rank of $S(X, \mathcal{P})$. Later, Ara\'{u}jo et al. \cite{arjo09} calculated the rank of $S(X, \mathcal{P})$ when $\mathcal{P}$ is a uniform partition of finite $X$. The homeomorphism group $S(X, \mathcal{P})$ is also studied by Ara\'{u}jo et al. in \cite{arjo15}.

\vspace{0.2cm}

For a partition $\mathcal{P} = \{X_i\;|\; i\in I\}$ of a set $X$, let
\[\Sigma(X, \mathcal{P}) = \{f\in T(X, \mathcal{P})\;|\; Xf \cap X_i \neq \emptyset\;\; \forall X_i \in \mathcal{P}\}.\]
It is clear that $\Sigma(X, \mathcal{P})$ is a subsemigroup of $T(X, \mathcal{P})$. When $\mathcal{P}$ is a uniform partition of finite $X$, Ara\'{u}jo et al. \cite{arjo09} calculated the rank of the semigroup $\Sigma(X, \mathcal{P})$. For a finite $X$, some interesting properties of $\Sigma(X, \mathcal{P})$ are also investigated in \cite{arjo15}.

\vspace{0.2cm}

The rest of the paper is organized as follows. In Section $2$, we recall necessary concepts from semigroup theory and combinatorics and introduce notation used within the paper. In Section $3$, we characterize the elements of the semigroup $\Sigma(X, \mathcal{P})$. For a permutation $f$ of finite $X$, we observe whether there exists a nontrivial partition $\mathcal{P}$ of $X$ such that $f\in S(X, \mathcal{P})$ in Section $4$. In Section $5$, we characterize and enumerate the idempotents in the semigroup $\Sigma(X, \mathcal{P})$ for arbitrary and finite $X$, respectively. Moreover, we characterize the elements of $S(X, \mathcal{P})$.
In Section $6$, we finally calculate the cardinality of $T(X, \mathcal{P})$, $\Sigma(X, \mathcal{P})$, and $S(X, \mathcal{P})$ when $X$ is a finite set.

\vspace{0.2cm}
\section{Preliminaries and Notation}

In this section, we introduce relevant notation and recall concepts from combinatorics and semigroup theory that are requisite to the paper. We refer the reader to the standard books \cite{bru10, howie95} for more detailed information from combinatorics and semigroup theory, respectively.

\vspace{0.2cm}

Unless stated otherwise, we will use capital letter to denote nonempty subset, calligraphic letter to denote collection of subsets, and small letter to denote set-element, map, or positive integer. The letter $I$ will be reserved for an arbitrary indexing set. The set of all positive integers is denoted by $\mathbb{N}$. We will always presume that $m$ and $n$ are positive integers. The symbol $I_m$ denote the subset $\{1,\ldots, m\}$. The number of elements of a set $A$ is denoted by $|A|$ and is called the \emph{cardinality} or \emph{size} of $A$. A set of cardinality $n$ is called an \emph{$n$-element set}. We write $A\setminus B$ to denote the set of all elements $x \in A$ such that $x\notin B$. We denote by $[a]$ the equivalence class of an element $a$ of a set $A$ under an equivalence relation on $A$. We denote by $\binom{n}{r}$ the number of $r$-element subsets of an $n$-element set. We will use $A = \{n_1\cdot a_1, \ldots, n_k\cdot a_k\}$ to denote the multiset $A$ with $n_i$ copies of $a_i$ for each $i\in I_k$.

\vspace{0.2cm}

Let $X$ be a nonempty set. A \emph{partition} of $X$ is a collection of nonempty disjoint subsets of $X$, called \emph{blocks}, whose union is $X$. A partition is called \emph{trivial} if it has only singleton blocks or a single block. A partition is called \emph{uniform} if all its blocks have the same size. An \emph{$m$-partition} is a partition that has exactly $m$ blocks. For $m, k \in \mathbb{N}$ with $m \ge k$, an \emph{$(m, k)$-partition} is an $m$-partition that has exactly $k$ different size blocks. It is well-known that any partition of $X$ induces naturally an equivalence relation on $X$, and vice versa (cf. \cite[Proposition 1.4.6]{howie95}).

\vspace{0.2cm}

The composition of maps will be denoted by juxtaposition. A \emph{selfmap} on a set $A$ is a map from $A$ to $A$. Let $f, g \in \mathcal{T}_X$. For $x\in X$, we will use $xf$ to denote the image of $x$ under $f$, and compose maps from left to right: $x(fg) = (xf)g$. The symbols $\mbox{dom}(f)$ and $\mbox{codom}(f)$ will be used to denote respectively the domain and the codomain of $f$. The pre-image of a subset $B\subseteq X$ under $f$ is denoted by $Bf^{-1} = \{x\in X\;|\; xf\in B\}$. If $A, B \subseteq X$ such that $Af \subseteq B$, then there is a map $g \colon A \to B$ defined by $xg = xf$ for all $x\in A$ and, in this case, we say that $g$ \emph{is induced by} $f$. If $X =\{1,\ldots, n\}$, we will write $\mathcal{T}_X$ and $\mathcal{S}_X$ as $T_n$ and $S_n$, respectively.

\vspace{0.2cm}

A permutation $f$ of $I_n$ is called a \emph{cycle of length $m$} or an \emph{$m$-cycle}, denoted by $(a_1, \ldots, a_m)$, if there exists a subset $\{a_1, \ldots, a_m\}$ of $I_n$ such that $a_i f = a_{i+1}$ for all $i = 1,\ldots, m-1$, $a_m f  = a_1$, and $af = a$ for all $a\notin\{a_1, \ldots, a_m\}$. Any two cycles $(a_1, \ldots, a_k)$ and $(b_1, \ldots, b_l)$ are said to be \emph{disjoint} if the subsets $\{a_1, \ldots, a_k\}$ and $\{b_1, \ldots, b_l\}$ are disjoint. It is well-known that every permutation is expressible as a composition of disjoint cycles (cf. \cite[Theorem 5.1]{gallian17}).

\vspace{0.2cm}

Let $S$ be a semigroup. An element $a\in S$ is called an \emph{idempotent} provided that $a^2 = a$. The set of idempotents of $S$ is denoted by $E(S)$. It is well-known that $f\in \mathcal{T}_X$ is an idempotent if and only if $f$ acts as the identity map on its image set (cf. \cite[pp. 6]{clifford61}). The \emph{group of units} of $S$ is the subgroup of all invertible elements of $S$. An equivalence relation $\rho$ on $S$ is called a \emph{congruence} if for all $x, y, z, w \in S),\; (x, y)\in \rho$ and  $(z, w) \in \rho$ implies $(xz, yw)\in \rho$. If $\rho$ is a congruence on $S$, then the factor set $S/\rho$ is a semigroup, called a \emph{quotient semigroup}, equipped with the multiplication defined by $[x] [y] = [xy]$ for all $[x], [y] \in S/\rho$. We will write $S \cong T$ to mean that there is an isomorphism between two semigroups $S$ and $T$.

\vspace{0.2cm}
\section{The Semigroup $\Sigma(X, \mathcal{P})$}

In this section, we first characterize the elements of the semigroup $\Sigma(X, \mathcal{P})$. We then prove two simple but important lemmas on $S(X, \mathcal{P})$ and $\Sigma(X,\mathcal{P})$, respectively.  We begin by recalling a definition.

\begin{definition}(cf. \cite{puri16})
Let $\mathcal{P}=\{X_i\;|\;i\in I\}$ be a partition of an arbitrary set $X$, and let $f \in T(X,\mathcal{P})$. The \emph{character} of $f$ is a selfmap $\chi^{(f)}\colon I \to I$ defined by $i\chi^{(f)}=j$ whenever $X_i f\subseteq X_j$.
\end{definition}

When $X$ is a finite set, the selfmap  $\chi^{(f)}$ has been discussed, and also denoted by $\overline{f}$ in \cite{arjo15, east-c16, east16}.

\vspace{0.2cm}
Denote by $S_{\mathcal{P}}(X)$ the semigroup of all continuous selfmaps on $X$ endowed with the topology having $\mathcal{P}$ as a basis. By \cite[Theorem 2.8]{pei-s94}, we know that $S_{\mathcal{P}}(X) = T(X, \mathcal{P})$ . We now have the following.

\begin{theorem}\label{chi-1}
Let $\mathcal{P}=\{X_i\;|\; i\in I\}$ be a partition of an arbitrary set $X$, and let $f\in T(X, \mathcal{P})$. Then the following statements are equivalent:
\begin{enumerate}
\item[\rm(i)] $f\in \Sigma(X,\mathcal{P})$.
\item[\rm(ii)] $\chi^{(f)}$ is a surjective map.
\item[\rm(iii)] $f\in S_{\mathcal{P}}(X)$ such that $Af^{-1}\neq \emptyset$ for all nonempty open set $A$.
\end{enumerate}
\end{theorem}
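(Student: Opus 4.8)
The plan is to prove the cyclic chain of implications $\mathrm{(i)} \Rightarrow \mathrm{(iii)} \Rightarrow \mathrm{(ii)} \Rightarrow \mathrm{(i)}$. Two facts will be used throughout: that $S_{\mathcal{P}}(X) = T(X,\mathcal{P})$ by \cite[Theorem 2.8]{pei-s94} (so the clause ``$f \in S_{\mathcal{P}}(X)$'' in (iii) is automatic under the standing hypothesis $f \in T(X,\mathcal{P})$, and (iii) is really a condition on preimages alone), and that in the topology on $X$ generated by the basis $\mathcal{P}$ the open sets are exactly the unions of blocks, so that every nonempty open set contains some block $X_i$ and, conversely, the blocks are precisely the minimal nonempty open sets.

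For $\mathrm{(i)} \Rightarrow \mathrm{(iii)}$, assuming $f \in \Sigma(X,\mathcal{P})$, I would take an arbitrary nonempty open set $A$, choose a block $X_i \subseteq A$, and use $Xf \cap X_i \neq \emptyset$ to produce $x \in X$ with $xf \in X_i \subseteq A$, giving $x \in Af^{-1}$. For $\mathrm{(iii)} \Rightarrow \mathrm{(ii)}$, I would fix $j \in I$, apply (iii) to the nonempty open set $X_j$ to obtain $x$ with $xf \in X_j$, let $X_i$ be the block containing $x$, and note that $X_i f \subseteq X_k$ for some $k$ forces $X_k = X_j$ by disjointness of the blocks (since $xf \in X_i f \cap X_j$); hence $i\chi^{(f)} = j$, and as $j$ was arbitrary, $\chi^{(f)}$ is surjective. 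Finally, for $\mathrm{(ii)} \Rightarrow \mathrm{(i)}$, given $j \in I$ I would pick $i$ with $i\chi^{(f)} = j$, that is $X_i f \subseteq X_j$, and observe that $X_i \neq \emptyset$ yields $\emptyset \neq X_i f \subseteq Xf \cap X_j$, so $Xf \cap X_j \neq \emptyset$.

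None of these steps is delicate, so I do not expect a genuine obstacle; the only point that warrants a little care is the topological bookkeeping around (iii), namely the observation that the blocks of $\mathcal{P}$ are the minimal nonempty open sets of the $\mathcal{P}$-topology. That is what makes ``$Af^{-1} \neq \emptyset$ for every nonempty open $A$'' equivalent to ``$X_i f^{-1} \neq \emptyset$ for every $i \in I$'', which is in turn just a restatement of $Xf \cap X_i \neq \emptyset$ for every $i$. If one prefers, the whole theorem can be viewed as three phrasings of this single condition, and one could instead prove $\mathrm{(i)} \Leftrightarrow \mathrm{(ii)}$ and $\mathrm{(i)} \Leftrightarrow \mathrm{(iii)}$ directly with essentially the same arguments.
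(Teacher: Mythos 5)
Your proposal is correct and rests on the same observations as the paper's proof (open sets in the $\mathcal{P}$-topology are unions of blocks, and each of (i)--(iii) reduces to ``$X_i f^{-1}\neq\emptyset$ for every $i$''); the only difference is that you traverse the cycle of implications in the opposite order, (i) $\Rightarrow$ (iii) $\Rightarrow$ (ii) $\Rightarrow$ (i) instead of the paper's (i) $\Rightarrow$ (ii) $\Rightarrow$ (iii) $\Rightarrow$ (i). This is not a substantive difference, and every step you outline goes through.
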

\begin{proof}
$\mbox{(i)}\Longrightarrow \mbox{(ii)}$. Let $j\in I$. Since $f\in \Sigma(X,\mathcal{P})$, we have $Xf\cap X_j\neq \emptyset$. Then there exists $i\in I$ such that $X_if\subseteq X_j$. It follows that $i\chi ^{(f)}=j$ by definition of $\chi^{(f)}$. Hence $\chi ^{(f)}$ is surjective.

\vspace{0.2cm}
$\mbox{(ii)}\Longrightarrow \mbox{(iii)}$. Let $A$ be a nonempty open set. Then $A=\bigcup_{j\in J}X_j$ for some $J\subseteq I$ (cf. \cite[Definition 2.2.1]{conway14}). Since $\chi^{(f)}\colon I \to I$ is surjective, for each $j\in J$ there exists $i\in I$ such that $i\chi^{(f)}=j$ and subsequently $X_i f \subseteq X_j$. That means $ X_jf^{-1}\neq \emptyset$ for all $j\in J$. Hence we obtain
\[Af^{-1} = \Big(\bigcup_{j\in J}X_j\Big)f^{-1}=\bigcup_{j\in J} (X_jf^{-1}) \neq \emptyset.\]

\vspace{0.2cm}
$\mbox{(iii)}\Longrightarrow \mbox{(i)}$.
Let $X_i \in \mathcal{P}$. By (iii), we have $X_if^{-1}\neq \emptyset$. It follows that $Xf\cap X_i\neq \emptyset$ and hence $f\in \Sigma(X,\mathcal{P})$.
\end{proof}

\begin{definition}[cf. \cite{pei-c05}]
Let $E$ be an equivalence relation on a set $X$. A selfmap  $f\colon X \to X$ is said to be \emph{$E^*$-preserving} if $f$ satisfies the following.
\[(x,y)\in E \mbox{ if and only if } (xf,yf)\in E.\]
\end{definition}

\vspace{0.1cm}
We next have the following.

\begin{theorem}\label{chi-2}
Let $\mathcal{P} = \{X_i\;|\; i\in I\}$ be the partition associated with an equivalence relation $E$ on an arbitrary set $X$, and let $f\in T(X, \mathcal{P})$. Then the following statements are equivalent:
\begin{enumerate}
\item[\rm(i)] $\chi^{(f)}$ is an injective map.
\item[\rm(ii)] $f$ is an $E^*$-preserving map.
\end{enumerate}
\end{theorem}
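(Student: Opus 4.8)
The plan is to begin with a reduction: since $f\in T(X,\mathcal{P})$ and the blocks of $\mathcal{P}$ are precisely the $E$-classes, the implication ``$(x,y)\in E\ \Rightarrow\ (xf,yf)\in E$'' holds automatically. Indeed, if $(x,y)\in E$ then $x,y\in X_i$ for some $i$, so $xf,yf\in X_if\subseteq X_{i\chi^{(f)}}$, giving $(xf,yf)\in E$. Hence, for $f\in T(X,\mathcal{P})$, being $E^*$-preserving is equivalent to the converse implication ``$(xf,yf)\in E\ \Rightarrow\ (x,y)\in E$'', and it suffices to show that this converse implication is equivalent to injectivity of $\chi^{(f)}$. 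Throughout I use the standing convention (the one that makes $\chi^{(f)}$ well defined in the first place) that $\mathcal{P}$ is indexed by distinct blocks, i.e. $X_i=X_k$ iff $i=k$.

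For $\mathrm{(i)}\Rightarrow\mathrm{(ii)}$, assume $\chi^{(f)}$ is injective and take $x,y\in X$ with $(xf,yf)\in E$. Write $x\in X_i$ and $y\in X_k$. Then $xf\in X_if\subseteq X_{i\chi^{(f)}}$ and $yf\in X_kf\subseteq X_{k\chi^{(f)}}$, and since $xf,yf$ lie in a common $E$-class, the blocks $X_{i\chi^{(f)}}$ and $X_{k\chi^{(f)}}$ coincide, so $i\chi^{(f)}=k\chi^{(f)}$. Injectivity forces $i=k$, hence $x,y\in X_i$ and $(x,y)\in E$. Together with the automatic forward implication noted above, this shows $f$ is $E^*$-preserving.

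For $\mathrm{(ii)}\Rightarrow\mathrm{(i)}$, assume $f$ is $E^*$-preserving and suppose $i\chi^{(f)}=k\chi^{(f)}=j$. Choose $x\in X_i$ and $y\in X_k$, which is possible as blocks are nonempty. Then $xf\in X_if\subseteq X_j$ and $yf\in X_kf\subseteq X_j$, so $(xf,yf)\in E$; by hypothesis $(x,y)\in E$, i.e. $x$ and $y$ lie in a common block. Since $x\in X_i$, $y\in X_k$, and distinct blocks are disjoint, we get $X_i=X_k$, hence $i=k$. Therefore $\chi^{(f)}$ is injective.

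I do not expect a genuine obstacle here; the argument is a short unwinding of the definitions. The only point that needs a moment of care is the opening observation that one half of the $E^*$-preservation biconditional is free for maps in $T(X,\mathcal{P})$, which cleanly isolates what actually has to be proved, plus the routine bookkeeping that ``same block'' translates to ``same index'' under the chosen indexing of $\mathcal{P}$.
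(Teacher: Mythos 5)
Your proposal is correct and follows essentially the same route as the paper: both isolate the automatic forward implication for $f\in T(X,\mathcal{P})$ and then translate ``same block'' into ``same value of $\chi^{(f)}$'' in each direction, the only difference being that you argue directly where the paper argues by contradiction.
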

\begin{proof}
$\mbox{(i)}\Longrightarrow \mbox{(ii)}$. Let $x,y\in X$. Since $f\in T(X,\mathcal{P})$, it is clear that if $(x,y)\in E$ then $(xf,yf)\in E$. On the other hand, assume that $(xf,yf)\in E$. Then there exists $X_l\in \mathcal{P}$ such that $xf,yf\in X_l$. We now claim that $x, y \in X_r$ for some $X_r \in \mathcal{P}$.

\vspace{0.2cm}
On the contrary, suppose that $x\in X_s$ and $y\in X_t$ for two distinct blocks $X_s, X_t \in \mathcal{P}$. Since $xf,yf\in X_l$, and $f\in T(X, \mathcal{P})$, we have $X_sf\subseteq X_l$ and $X_tf\subseteq X_l$. Then $s\chi^{(f)}=l=t\chi^{(f)}$ by definition of $\chi^{(f)}$. This contradicts the hypothesis that $\chi^{(f)}$ is injective. Hence $x, y \in X_r$ for some $X_r \in \mathcal{P}$ and consequently $f$ is $E^*$-preserving.

\vspace{0.2cm}
$\mbox{(ii)}\Longrightarrow \mbox{(i)}$. By contradiction, suppose that there are two distinct elements $s, t \in I$ such that $s\chi ^{(f)}=t\chi ^{(f)}$, say equal to $r$. Then $X_sf\subseteq X_r$ and $X_tf\subseteq X_r$ by definition of $\chi^{(f)}$. Let $x\in X_s$ and $y\in X_t$. Since $X_s\cap X_t = \emptyset$, it is clear that $(x, y)\notin E$.

\vspace{0.2cm}
Recall that $X_sf\subseteq X_r$ and $x\in X_s$, it follows that $xf \in X_r$. Also $X_tf\subseteq X_r$ and $y\in X_t$, it follows that $yf \in X_r$. Thus $(xf, yf)\in E$ which contradicts the hypothesis that $f$ is an $E^{*}$-preserving map. Hence $\chi^{(f)}$ is an injective map.
\end{proof}

\vspace{0.2cm}
Note that any selfmap on a finite set that is injective or surjective must be bijective (cf. \cite[Proposition 1.1.3]{gan-maz09}). Combining this fact with Theorem \ref{chi-1} and Theorem \ref{chi-2}, we have the following direct corollary.

\begin{corollary}\label{4-equi-sigma}
Let $\mathcal{P}= \{X_1, \ldots, X_m\}$ be an $m$-partition associated with an equivalence relation $E$ on an arbitrary set $X$, and let $f\in T(X, \mathcal{P})$. Then the following four statements are equivalent:
\begin{enumerate}
\item[\rm(i)] $f\in \Sigma (X,\mathcal{P})$.
\item[\rm(ii)] $\chi^{(f)}$ is a bijective map on $I_m$.
\item[\rm(iii)] $f$ is an $E^*$-preserving map.
\item[\rm(iv)] $f\in S_{\mathcal{P}}(X)$ such that $Af^{-1}\neq \emptyset$ for all nonempty open set $A$.
\end{enumerate}
\end{corollary}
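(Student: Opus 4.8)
The plan is to obtain everything as a formal consequence of the two preceding theorems together with the elementary fact, recalled immediately before the statement, that a selfmap of a finite set which is injective or surjective is automatically bijective. The essential observation is that, although $X$ itself need not be finite, the character $\chi^{(f)}$ is a selfmap of the \emph{finite} index set $I_m$; hence for $\chi^{(f)}$ the notions of injectivity, surjectivity and bijectivity all coincide.

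First I would apply Theorem \ref{chi-1} with $I = I_m$, which gives the chain of equivalences (i) $\Longleftrightarrow$ \textquotedblleft$\chi^{(f)}$ is surjective\textquotedblright $\Longleftrightarrow$ (iv). Next, invoking the finiteness of $I_m$, \textquotedblleft$\chi^{(f)}$ is surjective\textquotedblright\ is equivalent to \textquotedblleft$\chi^{(f)}$ is bijective\textquotedblright, which is statement (ii), and equally to \textquotedblleft$\chi^{(f)}$ is injective\textquotedblright. Finally, Theorem \ref{chi-2}, again with $I = I_m$ and using that $\mathcal{P}$ is the partition associated with the equivalence relation $E$, yields \textquotedblleft$\chi^{(f)}$ is injective\textquotedblright $\Longleftrightarrow$ (iii). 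Concatenating these biconditionals shows that (i)--(iv) are pairwise equivalent.

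There is essentially no obstacle here; the statement is a routine corollary. The only points meriting a moment's attention are to apply the finite-set cancellation argument to $I_m$ rather than to $X$, and to confirm that both Theorem \ref{chi-1} (which needs only $f \in T(X,\mathcal{P})$ for an arbitrary partition) and Theorem \ref{chi-2} (which additionally requires $\mathcal{P}$ to arise from an equivalence relation $E$) are genuinely applicable under the hypotheses of the corollary, which they are.
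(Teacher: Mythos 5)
Your proposal is correct and is exactly the argument the paper intends: combine Theorem \ref{chi-1} and Theorem \ref{chi-2} with the observation that a selfmap of the finite index set $I_m$ is injective iff surjective iff bijective. Nothing further is needed.
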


\vspace{0.1cm}
\begin{lemma}\label{image-set-block}
Let $\mathcal{P} = \{X_i\;|\; i\in I\}$ be a partition of an arbitrary set $X$, and let $f \in T(X, \mathcal{P})$. If $f \in S(X, \mathcal{P})$, then
\begin{enumerate}
\item[\rm(i)] $X_i f \in \mathcal{P}$ for all $X_i \in \mathcal{P}$.
\item[\rm(ii)] $|X_i| = |X_j|$ if $i\chi^{(f)} = j$.
\end{enumerate}
\end{lemma}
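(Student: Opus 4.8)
The plan is to exploit the definition of the group of units: $f\in S(X,\mathcal{P})$ means $f$ has a two-sided inverse $g$ in $T(X,\mathcal{P})$. Since $T(X,\mathcal{P})\subseteq\mathcal{T}_X$, the same $g$ is a two-sided inverse of $f$ in $\mathcal{T}_X$, so $f$ is a bijection of $X$, $g=f^{-1}$, and in particular $fg=gf=\mathrm{id}_X$ as selfmaps on $X$. I will also use the elementary fact that if two blocks of a partition are nested, then they coincide.

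For (i), fix $X_i\in\mathcal{P}$. Since $f\in T(X,\mathcal{P})$, there is $X_j\in\mathcal{P}$ with $X_if\subseteq X_j$; and since $g\in T(X,\mathcal{P})$, there is $X_k\in\mathcal{P}$ with $X_jg\subseteq X_k$. Composing from left to right and using $fg=\mathrm{id}_X$, I get
\[
X_i = X_i(fg) = (X_if)g \subseteq X_jg \subseteq X_k,
\]
so $X_i\subseteq X_k$, forcing $X_i=X_k$ and hence $X_jg\subseteq X_i$. Applying $f$ and using $gf=\mathrm{id}_X$ then gives $X_j=(X_jg)f\subseteq X_if$. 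Combined with $X_if\subseteq X_j$, this yields $X_if=X_j\in\mathcal{P}$.

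For (ii), suppose $i\chi^{(f)}=j$. By definition of the character this means $X_if\subseteq X_j$, and by part (i) we in fact have $X_if=X_j$. Because $f$ is injective (a consequence of $fg=\mathrm{id}_X$), its restriction to $X_i$ is a bijection from $X_i$ onto $X_j$, so $|X_i|=|X_j|$. The argument is entirely routine; the only thing to be careful about is the bookkeeping with left-to-right composition and the observation that one block contained in another must equal it, so I do not anticipate any real obstacle.
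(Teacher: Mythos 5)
Your proof is correct and follows essentially the same route as the paper: both arguments apply the inverse $f^{-1}\in T(X,\mathcal{P})$ to the inclusion $X_if\subseteq X_j$ and invoke the fact that nested blocks of a partition must coincide, with part (ii) then following from bijectivity of $f$. The only difference is that you argue directly ($X_j\subseteq X_if$, hence equality) where the paper argues by contradiction from $X_if\subsetneq X_j$; this is a cosmetic distinction, not a different method.
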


\begin{proof}
Note that any two blocks of a partition are either equal or disjoint.
\begin{enumerate}
\item[\rm(i)] On the contrary, suppose that $X_if \notin \mathcal{P}$. Then there exists $X_j \in \mathcal{P}$ such that $X_if \subsetneq X_j$.
Since $f \in S(X, \mathcal{P})$ and $S(X, \mathcal{P})$ is a subgroup, it follows that the inverse map $f^{-1}$ of the bijection $f$ belongs to $S(X, \mathcal{P})$. Then $X_j f^{-1} \subseteq X_k$ for some $X_k \in \mathcal{P}$. Thus we obtain \[X_i = (X_i f)f^{-1} \subsetneq X_j f^{-1} \subseteq X_k,\] which contradicts the fact that any two blocks are either equal or disjoint. Hence $X_if \in \mathcal{P}$.

\vspace{0.2cm}
\item[\rm(ii)] If $i\chi^{(f)} = j$, then $X_i f \subseteq X_j$ by definition of $\chi^{(f)}$.
By (i), we have $X_i f \in \mathcal{P}$. Recall that any two blocks are either equal or disjoint. Since $X_j \in \mathcal{P}$, we have $X_i f = X_j$ whence $|X_i| = |X_j|$ since $f$ is a bijection.
\end{enumerate}
\end{proof}

We next recall the following congruence on the semigroup $T(X,\mathcal{P})$ which is induced by $\chi^{(f)}$.
\begin{remark}(cf. \cite{puri16})
Let $\mathcal{P}=\{X_i\;|\;i\in I\}$ be a partition of an arbitrary set $X$. Then the relation $\chi$ on
$T(X,\mathcal{P})$ defined by \[(f, g)\in \chi \Longleftrightarrow \chi^{(f)} = \chi^{(g)}\] is a congruence on the semigroup $T(X,\mathcal{P})$.
\end{remark}

\vspace{0.2cm}
Note that the congruence $\chi$ defined above on the semigroup $T(X,\mathcal{P})$ is also a congruence on the semigroup $\Sigma(X,\mathcal{P})$, and the quotient semigroup $\Sigma(X,\mathcal{P})/\chi $ is a subsemigroup of $T(X,\mathcal{P})/\chi$. We now prove the following proposition.

\vspace{0.1cm}
\begin{lemma}\label{lemma-2}
Let $\mathcal{P}= \{X_i\;|\; i \in I\}$ be a partition of a set $X$. Then the semigroup
$\Sigma(X,\mathcal{P})/ \chi$ is isomorphic to the semigroup of all surjective selfmaps on $I$.
\end{lemma}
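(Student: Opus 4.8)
The natural approach is to build a map from $\Sigma(X,\mathcal{P})/\chi$ to the semigroup $\mathrm{Sur}(I)$ of surjective selfmaps on $I$ by sending each class $[f]$ to the character $\chi^{(f)}$, and to show that this map is a well-defined semigroup isomorphism. The work splits into four routine verifications: that the assignment is well-defined, that it is a homomorphism, that it is injective, and that it is surjective. Three of these four steps follow almost immediately from material already established in the excerpt; the surjectivity step is the one that requires a genuine (if short) construction.

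First I would check that $[f]\mapsto \chi^{(f)}$ is well-defined as a map into $\mathrm{Sur}(I)$: the target really is a surjective selfmap of $I$ by Theorem~\ref{chi-1}, since $f\in\Sigma(X,\mathcal{P})$ iff $\chi^{(f)}$ is surjective; and independence of the representative is exactly the definition of the congruence $\chi$, i.e. $(f,g)\in\chi$ means $\chi^{(f)}=\chi^{(g)}$. Next, that the map is a homomorphism amounts to the identity $\chi^{(fg)}=\chi^{(f)}\chi^{(g)}$ for $f,g\in T(X,\mathcal{P})$: if $X_i f\subseteq X_j$ and $X_j g\subseteq X_k$, then $X_i(fg)=(X_i f)g\subseteq X_j g\subseteq X_k$, so $i\chi^{(fg)}=k=(i\chi^{(f)})\chi^{(g)}$. (This is implicitly what makes $\chi$ a congruence in the first place, as recorded in the Remark above.) Injectivity is then immediate: if $\chi^{(f)}=\chi^{(g)}$ then $(f,g)\in\chi$, so $[f]=[g]$ in the quotient.

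The substantive step is surjectivity: given an arbitrary surjective selfmap $\varphi\colon I\to I$, I must produce some $f\in\Sigma(X,\mathcal{P})$ with $\chi^{(f)}=\varphi$. The idea is to define $f$ blockwise. For each $i\in I$, set $j=i\varphi$; pick any element $x_{ij}\in X_j$ (possible since $X_j\neq\emptyset$) and define $xf=x_{ij}$ for every $x\in X_i$ — that is, $f$ is constant on each block $X_i$ with value lying in $X_{i\varphi}$. Then $X_i f=\{x_{ij}\}\subseteq X_{i\varphi}$, so $f\in T(X,\mathcal{P})$ and $\chi^{(f)}=\varphi$. Moreover $\varphi$ is surjective, so $\chi^{(f)}$ is surjective, whence $f\in\Sigma(X,\mathcal{P})$ by Theorem~\ref{chi-1}. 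Thus $[f]\mapsto\varphi$, and the map is onto.

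Combining the four verifications, $[f]\mapsto\chi^{(f)}$ is an isomorphism $\Sigma(X,\mathcal{P})/\chi\to\mathrm{Sur}(I)$, which is the claim. The only point that needs any care is a potential pitfall in the surjectivity construction rather than a deep obstacle: one must make sure the constructed $f$ is a genuine selfmap of $X$ defined on all of $X$ (the blocks $X_i$ partition $X$, so specifying $f$ on each block does define $f$ on $X$) and that no additional condition beyond "$\chi^{(f)}=\varphi$" is silently required to land in $\Sigma(X,\mathcal{P})$ — but Theorem~\ref{chi-1} guarantees exactly that surjectivity of $\chi^{(f)}$ is both necessary and sufficient, so the construction closes the argument.
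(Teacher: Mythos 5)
Your proposal is correct and follows essentially the same route as the paper: the same map $[f]\mapsto\chi^{(f)}$, the same four verifications, and the same blockwise-constant construction for surjectivity (the paper picks one element $x_j$ per target block $X_j$, which is an immaterial variant of your choice of $x_{ij}$). No gaps.
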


\begin{proof}
Denote by $\mathcal{O}(I)$ the semigroup of all surjective selfmaps on $I$. Define a map $\phi \colon \Sigma(X,\mathcal{P})/ \chi \to  \mathcal{O}(I)$ by setting $[f]\phi = \chi^{(f)}$ for all $[f]\in \Sigma(X,\mathcal{P})/\chi$. By Theorem \ref{chi-1}, it is clear that $\phi$ is well-defined.

\vspace{0.2cm}
Let $[f], [g]\in \Sigma(X,\mathcal{P})/ \chi$. Note that $\chi^{(fg)}=\chi^{(f)} \chi^{(g)}$ (cf. \cite[Lemma 2.3]{puri16}. Therefore, we obtain \[([f][g])\phi= \chi^{(fg)}=\chi^{(f)} \chi^{(g)}= ([f]\phi) ([g]\phi)\] whence $\phi$ is a homomorphism. To see $\phi$ is injective, suppose that $[f]\phi =[g]\phi$. Then \[\chi^{(f)}=\chi^{(g)}\implies (f,g)\in \chi \implies [f]=[g]\] and
$\phi $ is injective.

\vspace{0.1cm}
To show that $\phi$ is surjective, let $\psi \in \mathcal{O}(I)$. For each $j\in I$, we arbitrarily choose an element $x_{j}\in X_{j}$.
Define a map $f\colon  X\to X$ by setting $xf=x_j$ whenever $x\in X_i$ and $i\psi = j$. Since $\psi$ is surjective, one can verify, in a routine manner, that $f\in \Sigma(X,\mathcal{P})$ and $\chi ^{(f)}=\psi$. Therefore, $[f]\phi =\chi ^{(f)}=\psi$ whence $\phi$ is surjective. This completes the proof.
\end{proof}

\vspace{0.1cm}
\section{Permutations that Preserve Nontrivial Partition}

When $\mathcal{P}$ is a trivial partition of $X$, it is clear that $T(X, \mathcal{P}) = \mathcal{T}_X$ and $S(X, \mathcal{P}) = \mathcal{S}_X$. Observe that $T(X, \mathcal{P}) \subsetneq \mathcal{T}_X$ and $S(X, \mathcal{P}) \subsetneq \mathcal{S}_X$ for all nontrivial partition $\mathcal{P}$ of $X$. We will now naturally be concerned of the converse problem: for a map $f\in \mathcal{T}_X$ ($f\in \mathcal{S}_X$), does there exist a nontrivial partition $\mathcal{P}$ of $X$ such that $f\in T(X, \mathcal{P})$ ($f\in S(X, \mathcal{P})$)? This section deals with this interesting problem when $X$ is finite. In the rest of this section, let $X = \{1,\ldots, n\}$.

\vspace{0.2cm}
Let $f\in T_n \setminus S_n$. If $f$ is a constant map, it is clear that $f \in T(X, \mathcal{P})$ for each nontrivial partition $\mathcal{P}$ of $X$. Otherwise if $f$ is a nonconstant map, we see that $f\in T(X,\mathcal{P})$ where $\mathcal{P}$ is the nontrivial partition of $X$ induced by the equivalence relation $\ker(f)$ on $X$ defined as $\ker(f) = \{(x, y)\;|\; xf  = yf\}$. Thus,

\begin{remark}
If $f\in T_n \setminus S_n$, then there exists a nontrivial partition $\mathcal{P}$ of $X$ such that $f\in T(X,\mathcal{P})$.
\end{remark}

\begin{proposition}
Let $f\in S_n$. If $f$ is not an $n$-cycle, then there exists a nontrivial partition $\mathcal{P}$ of $X$ such that $f\in S(X,\mathcal{P})$.
\end{proposition}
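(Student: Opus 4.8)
The plan is to reduce membership in $S(X,\mathcal{P})$ to a purely combinatorial condition on partitions and then exhibit one explicitly. Recall that $S(X,\mathcal{P})$ is the group of units of $T(X,\mathcal{P})$; hence a permutation $g$ of $X$ lies in $S(X,\mathcal{P})$ precisely when both $g$ and $g^{-1}$ belong to $T(X,\mathcal{P})$ (then $g^{-1}$ witnesses invertibility, since $gg^{-1}=\mathrm{id}_X$ preserves every partition). Since $X$ is finite, this is in turn equivalent to saying that $g$ \emph{permutes the blocks} of $\mathcal{P}$, i.e. $X_ig\in\mathcal{P}$ for every $X_i\in\mathcal{P}$: one implication is Lemma~\ref{image-set-block}(i), and the converse is immediate because a block-permuting bijection certainly lies in $T(X,\mathcal{P})$ and its inverse is again block-permuting. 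So it suffices to produce a nontrivial partition of $X$ whose blocks are permuted by $f$.

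The natural candidate is the partition $\mathcal{P}_f$ of $X$ into the orbits of the cyclic group $\langle f\rangle$ — equivalently, the supports of the disjoint cycles of $f$ together with the fixed points of $f$ taken as singleton blocks. Each orbit is $f$-invariant, so $X_if=X_i\in\mathcal{P}_f$ for every block, and thus $f$ (and likewise $f^{-1}$) permutes, indeed fixes, every block of $\mathcal{P}_f$. It remains only to check that $\mathcal{P}_f$ is nontrivial. Writing $f$ as a product of disjoint cycles and counting each fixed point as a $1$-cycle, the hypothesis that $f$ is not an $n$-cycle says exactly that this decomposition has at least two cycles, so $\mathcal{P}_f$ has at least two blocks; and if moreover $f\neq\mathrm{id}_X$, then at least one cycle has length $\geq 2$, so $\mathcal{P}_f$ has a block of size $\geq 2$. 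Hence $\mathcal{P}_f$ is neither the single-block partition nor the all-singletons partition, i.e. it is nontrivial, and $f\in S(X,\mathcal{P}_f)$.

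The only degenerate case is $f=\mathrm{id}_X$, for which $\mathcal{P}_f$ collapses to the trivial all-singletons partition; but here there is nothing to prove, since $|X|=n\geq 3$ guarantees the existence of a nontrivial partition of $X$ (for instance one block of size $2$ together with $n-2$ singletons), and $\mathrm{id}_X$ lies in $S(X,\mathcal{P})$ for every partition $\mathcal{P}$. This separation of the identity case is really the only subtlety in the argument; the rest is the routine identification of ``$f$ is not an $n$-cycle'' with ``the disjoint-cycle decomposition of $f$, fixed points included, has at least two parts'', together with the elementary fact that a permutation fixes each of its own orbits.
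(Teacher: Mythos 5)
Your proof is correct and follows essentially the same route as the paper's: both extract a nontrivial $f$-invariant partition from the disjoint cycle decomposition and dispose of the identity permutation separately. The only cosmetic difference is that you take the full orbit partition of $\langle f\rangle$, whereas the paper uses the two-block partition $\{X_1, X\setminus X_1\}$ with $X_1$ the support of a single cycle; both partitions are nontrivial and block-wise fixed by $f$ for exactly the same reason.
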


\begin{proof}
The result is obvious for the identity permutation since the identity permutation preserves each partition of $X$. Therefore, suppose that $f$ is a nonidentity permutation. If $f$ is not an $n$-cycle, we can write \[f=\beta_1\beta_2\cdots\beta_s,\] where $\beta_i$'s are cycles of length less than $n$ (cf. \cite[Theorem 5.1]{gallian17}). Let $X_1=\{x\in X\;|\; x\beta_1\neq x\}$. It is clear that $X_1f = X_1$ and $(X\setminus X_1)f = (X\setminus X_1)$. Moreover, one can verify, in a routine manner, that $\mathcal {P}=\{ X_1, X\setminus X_1\}$ is a nontrivial partition of $X$. Hence, $f\in S(X,\mathcal{P})$ where $\mathcal{P}$ is a nontrivial partition of $X$.
\end{proof}

\begin{theorem}\label{k-divide-n}
Let $f\in S_n$ be an $n$-cycle, and let $m$ be an integer such that $1< m < n$. Then $m$ divides $n$ if and only if there exists a nontrivial $m$-partition $\mathcal{P}$ of $X$ such that $f\in S(X, \mathcal{P})$.
\end{theorem}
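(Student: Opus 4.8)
The plan is to establish the two implications separately, in both cases leaning on Lemma~\ref{image-set-block}, which pins down how an element of $S(X,\mathcal{P})$ interacts with the blocks.

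For the ``only if'' direction, suppose $\mathcal{P}=\{X_1,\dots,X_m\}$ is a nontrivial $m$-partition of $X$ with $f\in S(X,\mathcal{P})$. By Lemma~\ref{image-set-block}(i) each $X_i f$ is again a block, and since $X_i f\subseteq X_{i\chi^{(f)}}$ with both sides blocks we in fact have $X_i f = X_{i\chi^{(f)}}$; as $f$ is a bijection of $X$, the induced selfmap $\chi^{(f)}$ is a permutation of $I_m$. The key observation I would make is that $\chi^{(f)}$ must be a single $m$-cycle: if it were not, it would have an orbit $O$ with $|O|<m$, and then $Y=\bigcup_{i\in O}X_i$ would be a proper nonempty subset of $X$ with $Yf=Y$, contradicting the fact that the $n$-cycle $f$ acts transitively on the $n$-element set $X$ and so admits no proper nonempty invariant subset. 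With $\chi^{(f)}$ an $m$-cycle, applying Lemma~\ref{image-set-block}(ii) repeatedly along this cycle yields $|X_1| = |X_2| = \cdots = |X_m|$; writing $k$ for this common value gives $n = |X| = \sum_{i=1}^{m}|X_i| = mk$, so $m\mid n$.

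For the ``if'' direction, assume $m\mid n$ and write $n = mk$; since $1<m<n$ we have $k\ge 2$. Writing $f$ in cycle notation as $f=(a_1,a_2,\dots,a_n)$, I would set $X_i = \{a_i, a_{i+m}, a_{i+2m},\dots, a_{i+(k-1)m}\}$ for $i\in I_m$. These $m$ sets are nonempty, pairwise disjoint, and cover $X$, so $\mathcal{P}=\{X_1,\dots,X_m\}$ is an $m$-partition; each block has size $k\ge 2$ and there are $m\ge 2$ of them, so $\mathcal{P}$ is nontrivial. A direct bookkeeping of indices shows $X_i f = X_{i+1}$ for $i<m$ and $X_m f = X_1$, so $f$ preserves $\mathcal{P}$ and permutes its blocks cyclically; hence $f^{-1}$ also preserves $\mathcal{P}$, so $f$ is invertible in $T(X,\mathcal{P})$, i.e.\ $f\in S(X,\mathcal{P})$.

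The only step that is not routine verification is the claim in the ``only if'' direction that $\chi^{(f)}$ is forced to be an $m$-cycle; this is precisely where the hypothesis that $f$ is an $n$-cycle (equivalently, that $\langle f\rangle$ acts transitively on $X$) genuinely enters, and once it is in place the size equality propagated by Lemma~\ref{image-set-block}(ii) delivers the divisibility at once. I do not anticipate any serious obstacle beyond stating that transitivity argument cleanly.
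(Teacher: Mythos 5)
Your proposal is correct, and its overall architecture (explicit arithmetic-progression blocks for the ``if'' direction, Lemma~\ref{image-set-block} for the ``only if'' direction) matches the paper's. The one place where you genuinely diverge is the key step of the converse: you prove that $\chi^{(f)}$ must be a single $m$-cycle by observing that any proper orbit $O$ of $\chi^{(f)}$ would yield a proper nonempty $f$-invariant subset $Y=\bigcup_{i\in O}X_i$, contradicting transitivity of the $n$-cycle $f$ on $X$. The paper instead rules out fixed blocks and $2$-cycles of blocks individually and then declares ``without loss of generality'' that $X_i f = X_{i+1}$ cyclically --- a step that does not actually follow from excluding only $1$- and $2$-cycles of the block permutation (a cycle of intermediate length $3\le t<m$ is not addressed there). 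Your invariant-subset argument closes that gap in one stroke and is the argument one should prefer; once $\chi^{(f)}$ is known to be an $m$-cycle, both proofs finish identically by propagating $|X_i|=|X_{i\chi^{(f)}}|$ around the cycle to get $n=mk$. Your justification that $f\in S(X,\mathcal{P})$ (rather than merely $f\in T(X,\mathcal{P})$) in the ``if'' direction, via $f^{-1}$ also preserving $\mathcal{P}$, is likewise correct and slightly more explicit than the paper's.
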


\begin{proof}
We first assume that $n=km$ for some integer $k$ with $1<k<n$. Without loss of generality, let $f=(1,\dots ,n)$ be an $n$-cycle. Consider, for each $1 \le i \le m$, the subset
\[X_i=\{i, i+m, i+2m,\ldots, i+(k-1)m\}.\]
of $X$. One can verify, in a routine manner, that $X_i\cap X_j= \emptyset$ for $i\neq j$, and $X=\bigcup_{i\in I_m}X_i$. This implies $\mathcal{P} = \{X_1, \ldots, X_m\}$ is a nontrivial $m$-partition of $X$. Since $f = (1,2,\ldots, n)$, we next see that $X_i f = X_{i+1}$ for $1\le i \le m-1$ and $X_m f = X_1$. Hence $f\in S(X,\mathcal{P})$ where $\mathcal{P}$ is a nontrivial $m$-partition of $X$.

\vspace{0.2cm}
Conversely, assume that $\mathcal{P} = \{X_1, \ldots, X_m\}$ is a nontrivial $m$-partition of $X$ such that $f\in S(X,\mathcal{P})$. Since $f \in S(X, \mathcal{P})$, by Lemma \ref{image-set-block}(i), we have $X_if \in \mathcal{P}$ for all $i\in I_m$. We first claim that $X_if \neq X_i$ for all $i\in I_m$. If possible, let $X_sf=X_s$ for some $s\in I_m$. Then $(X\setminus X_s)f= X\setminus X_s$. It follows that that $f$ is not an $n$-cycle, which is a contradiction. Hence $X_if \neq X_i$ for all $i\in I_m$.

\vspace{0.2cm}
Let $i, j \in I_m$ such that $i \neq j$. If $X_i f = X_j$, then we claim that $X_j f \neq X_i$. If possible, let $X_j f = X_i$. Then $f$ is not an $n$-cycle, which is a contradiction. Thus, without loss of generality, assume that
$X_i f = X_{i+1}$ for each $1 \le i \le m-1$ and $X_m f = X_1$. Since $f\in S(X,\mathcal{P})$, by Lemma \ref{image-set-block}(ii), we have $|X_1| =\cdots =|X_m|$. Then letting $|X_i|= t$ for each $i\in I_m$, we obtain
\[|X_1|+ \cdots + |X_m| = n\Longrightarrow tm = n\] whence $m$ divides $n$. This completes the proof.
\end{proof}

\vspace{0.1cm}
The following corollary is an immediate consequence of Theorem \ref{k-divide-n}.

\begin{corollary}
Let $f\in S_n$ be an $n$-cycle. If $n$ is a prime, then $f\notin S(X, \mathcal{P})$ for any nontrivial partition $\mathcal{P}$ of $X$.
\end{corollary}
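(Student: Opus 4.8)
The plan is to obtain this as an immediate corollary of Theorem \ref{k-divide-n} via a short argument by contradiction. Suppose, on the contrary, that $n$ is prime and yet $f\in S(X,\mathcal{P})$ for some nontrivial partition $\mathcal{P}$ of $X$. Since $X$ has $n$ elements, $\mathcal{P}$ is an $m$-partition for some integer $m$ with $1\le m\le n$, and the first task is to show that in fact $1<m<n$.

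To see this I would simply unwind the definition of \emph{trivial} partition: the case $m=1$ is the single-block partition of $X$, and the case $m=n$ forces every block to be a singleton, so both are trivial. As $\mathcal{P}$ is assumed nontrivial, neither extreme can occur, hence $1<m<n$. Now I would invoke the ``only if'' direction of Theorem \ref{k-divide-n}: since $f$ is an $n$-cycle and there exists a nontrivial $m$-partition $\mathcal{P}$ with $1<m<n$ and $f\in S(X,\mathcal{P})$, we conclude that $m$ divides $n$. But $m\mid n$ together with $1<m<n$ contradicts the primality of $n$, and this contradiction finishes the proof.

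There is no genuine obstacle here once Theorem \ref{k-divide-n} is available; the proof is essentially a one-liner. The only point that deserves a word of care is the elementary observation that a nontrivial partition of an $n$-element set has strictly more than one and strictly fewer than $n$ blocks — precisely the range of $m$ to which Theorem \ref{k-divide-n} applies, and precisely the range excluded when $n$ is prime.
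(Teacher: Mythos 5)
Your proof is correct and is exactly the argument the paper intends, since it states the corollary as an immediate consequence of Theorem \ref{k-divide-n} without writing out the details: a nontrivial partition of an $n$-element set has $m$ blocks with $1<m<n$, the theorem then forces $m\mid n$, and primality of $n$ gives the contradiction. The only quibble is terminological: the implication you use (existence of the partition implies $m\mid n$) is the ``if'' direction of the theorem as stated, not the ``only if'' direction.
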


\begin{proposition}
Let $\mathcal{P} = \{X_1, \ldots, X_m\}$ be an $m$-partition of $X$, and let $f\in S(X,\mathcal{P})$. If $f$ is an $n$-cycle, then
\begin{enumerate}
\item[\rm(i)] $\chi^{(f)}$ is an $m$-cycle on $I_m$.
\item[\rm(ii)] $\mathcal{P}$ is a uniform partition of $X$.
\end{enumerate}
\end{proposition}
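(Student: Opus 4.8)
The plan is to first establish that $\chi^{(f)}$ is a permutation of $I_m$, then to force it to be a single $m$-cycle by exploiting the fact that an $n$-cycle on $X$ has no proper nonempty invariant subset, and finally to read off uniformity of $\mathcal{P}$ from Lemma \ref{image-set-block}(ii). To begin, since $f$ is a bijection of $X$ we have $Xf = X$, so $Xf \cap X_i = X_i \neq \emptyset$ for every block $X_i$; hence $f \in \Sigma(X,\mathcal{P})$, and Corollary \ref{4-equi-sigma} then tells us $\chi^{(f)}$ is a bijective selfmap of $I_m$, i.e.\ a permutation. Moreover, by Lemma \ref{image-set-block}(i) each $X_i f$ is a block, and since $X_i f \subseteq X_{i\chi^{(f)}}$ with both sets blocks of $\mathcal{P}$, in fact $X_i f = X_{i\chi^{(f)}}$; thus $f$ permutes the blocks of $\mathcal{P}$ exactly according to $\chi^{(f)}$.

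For part (i), express $\chi^{(f)}$ as a product of disjoint cycles and let $J \subseteq I_m$ be the cyclic orbit of $1$ under $\chi^{(f)}$, so $\chi^{(f)}$ restricts to a cyclic permutation of $J$ and $J\chi^{(f)} = J$. Put $Y = \bigcup_{j\in J} X_j$. Since $Yf = \bigcup_{j\in J} X_j f = \bigcup_{j\in J} X_{j\chi^{(f)}} = Y$, the set $Y$ is $f$-invariant, and it is clearly nonempty. If $|J| < m$, there is a block $X_k$ with $k\notin J$, so $Y \subsetneq X$; then $f$ restricts both to a permutation of the proper nonempty subset $Y$ and to one of $X\setminus Y$. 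This is impossible when $f$ is an $n$-cycle, since $\langle f\rangle$ acts transitively on $X$ and hence the only $f$-invariant subsets of $X$ are $\emptyset$ and $X$. Therefore $|J| = m$, so $\chi^{(f)}$ is an $m$-cycle on $I_m$. (This is the same device used in the proof of Theorem \ref{k-divide-n} to exclude $X_s f = X_s$, now applied to arbitrary unions of blocks indexed by a $\chi^{(f)}$-orbit.)

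For part (ii), relabel the blocks so that $\chi^{(f)} = (1,2,\ldots,m)$, which is legitimate by (i). Then $i\chi^{(f)} = i+1$ for $1 \le i \le m-1$, so Lemma \ref{image-set-block}(ii) yields $|X_1| = |X_2| = \cdots = |X_m|$. Hence every block of $\mathcal{P}$ has the same size, i.e.\ $\mathcal{P}$ is a uniform partition of $X$.

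I expect the only genuine obstacle to be the step in part (i): recognizing that a proper $\chi^{(f)}$-invariant set of indices yields a proper nonempty $f$-invariant subset of $X$, and invoking the transitivity of the cyclic group generated by an $n$-cycle to derive a contradiction. The remainder is routine bookkeeping with the already-established Corollary \ref{4-equi-sigma} and Lemma \ref{image-set-block}.
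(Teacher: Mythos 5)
Your proof is correct and follows essentially the same route as the paper: both reduce part (i) to the fact that a proper $\chi^{(f)}$-invariant set of indices would yield a proper nonempty $f$-invariant union of blocks, which is impossible for an $n$-cycle, and both obtain (ii) by relabelling so that $\chi^{(f)}=(1,\ldots,m)$ and applying Lemma \ref{image-set-block}(ii). If anything, your version of (i) is slightly tighter: the paper detours through Theorem \ref{k-divide-n} and extracts a $t$-cycle with $1<t<m$ from the decomposition of $\chi^{(f)}$ (overlooking the case where the shorter orbits all have length $1$), whereas your orbit-of-$1$ argument handles every case uniformly.
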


\begin{proof}\
\begin{enumerate}
\item[\rm(i)] Note that $ S(X,\mathcal{P})\subseteq \Sigma (X,\mathcal{P})$. In view of Corollary \ref{4-equi-sigma}, $\chi^{(f)}$ is a permutation of $I_m$. If $f$ is an $n$-cycle,
then $m$ divides $n$ by Theorem \ref{k-divide-n}. If $m$ is an improper divisor of $n$, one can easily verify, in a routine manner, that $\chi^{(f)}$ is an $m$-cycle. Otherwise, assume that $m$ is a proper divisor of $n$.

\vspace{0.2cm}
On the contrary, suppose that $\chi^{(f)}$ is not an $m$-cycle. Then $\chi^{(f)}$ can be written as a product of disjoint cycles (cf. \cite[Theorem 5.1]{gallian17}). Let $(i_1, \ldots, i_t)$ be a $t$-cycle, $1< t < m$, in the cycle decomposition of $\chi^{(f)}$. Then $i_r \chi^{(f)} = i_{r+1}$ for $r = 1,\ldots, t-1$ and $i_t \chi^{(f)} = i_1$.

\vspace{0.2cm}
Since $f\in S(X,\mathcal{P})$, by Lemma \ref{image-set-block}(i) we thus obtain $X_{i_r}f = X_{i_{r+1}}$ for all $r = 1,\ldots, t-1$ and $X_{i_t} f = X_{i_1}$. This means that $f$ is a cycle of length at most $tk$, which is obviously less than $n$, where $k= |X_{i_j}|$ for each $j = 1,\ldots, t$. This gives a contradiction of the hypothesis that $f$ is an $n$-cycle. Hence $\chi^{(f)}$ is an $m$-cycle on $I_m$.

\vspace{0.2cm}
\item[\rm(ii)] Without loss of generality, assume that $\chi^{(f)} = (1,\ldots, m)$ by (i). This means $i\chi^{(f)} = i+1$ for all $i = 1,\ldots, m-1$ and $m\chi^{(f)} = 1$. Since $f\in S(X,\mathcal{P})$, by Lemma \ref{image-set-block}(ii) we have $|X_1| = \cdots = |X_m|$ whence $\mathcal{P}$ is a uniform partition of $X$.
\end{enumerate}
\end{proof}

\section{Idempotents in $\Sigma(X, \mathcal{P})$}

In this section, we first define a notion of block maps associated with a set partition. We then proceed with lemmas and a corollary. By using these lemmas, we next characterize the idempotents of the semigroup $\Sigma(X, \mathcal{P})$. We also prove a necessary and sufficient condition for a map of $T(X, \mathcal{P})$ that belongs to $S(X, \mathcal{P})$. We finally count the idempotents in the semigroup $\Sigma(X, \mathcal{P})$ when $X$ is finite.

\vspace{0.1cm}
\begin{definition}
Let $\mathcal{P}$ be a partition of an arbitrary set. A \emph{block map} is a map whose both domain and codomain are blocks of $\mathcal{P}$.
\end{definition}

\vspace{0.1cm}
We now prove the following simple but important lemmas.
\begin{lemma}\label{fam-func}
Let $\mathcal{P} = \{X_i\;|\;i\in I\}$ be a partition of a set $X$, and let $f \in \mathcal{T}_X$. Then $f\in T(X,\mathcal{P})$ if and only if there exists a unique indexed family $B(f, I)$ of block maps induced by $f$, where \[B(f, I) = \{f_i \;|\;  f_i \mbox{ is induced by }f\mbox{and } \mbox{dom}(f_i) = X_i \mbox{ for each }i\in I\}.\]
\end{lemma}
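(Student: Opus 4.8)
The plan is to prove the biconditional in Lemma~\ref{fam-func} by unpacking the definitions on both sides and exhibiting the family $B(f,I)$ explicitly. First I would prove the forward direction: assume $f \in T(X,\mathcal{P})$. For each $i \in I$, the defining property of $T(X,\mathcal{P})$ gives a block $X_j$ with $X_i f \subseteq X_j$; this block is unique because the blocks of $\mathcal{P}$ are pairwise disjoint and $X_i$ is nonempty. Hence the restriction $f|_{X_i}$ has image inside $X_j$, so by the notion ``induced by'' recalled in Section~2 there is a well-defined block map $f_i \colon X_i \to X_j$ with $x f_i = x f$ for all $x \in X_i$. Collecting these gives the family $B(f,I)$; I would note $j = i\chi^{(f)}$, tying the codomain of $f_i$ to the character of $f$, so $\mathrm{codom}(f_i) = X_{i\chi^{(f)}}$.

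Next I would address uniqueness of the family. Suppose $B'(f,I) = \{f_i'\}$ is another indexed family of block maps induced by $f$ with $\mathrm{dom}(f_i') = X_i$ for each $i$. Since $f_i'$ is induced by $f$, we have $x f_i' = x f = x f_i$ for every $x \in X_i$, so $f_i'$ and $f_i$ agree as maps on $X_i$; and since each is a block map, its codomain is the unique block containing $X_i f$, hence $\mathrm{codom}(f_i') = \mathrm{codom}(f_i)$. Therefore $f_i' = f_i$ for all $i$ and the family is unique. For the converse direction, assume such a family $B(f,I)$ exists. Fix $X_i \in \mathcal{P}$ and let $f_i \in B(f,I)$ with $\mathrm{dom}(f_i) = X_i$; since $f_i$ is a block map, $\mathrm{codom}(f_i) = X_j$ for some $X_j \in \mathcal{P}$, and since $f_i$ is induced by $f$, $X_i f = X_i f_i \subseteq X_j$. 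As $X_i$ was arbitrary, $f \in T(X,\mathcal{P})$.

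The argument is essentially a bookkeeping exercise, so I do not anticipate a genuine obstacle; the one point that needs a little care is making the word ``unique'' in the statement precise. The family is indexed by $I$, so ``unique'' should mean: once we demand $\mathrm{dom}(f_i) = X_i$, the map $f_i$ is forced both in its rule (it must equal $f$ on $X_i$) and in its codomain (it must be the unique block containing $X_i f$). I would make sure the proof states explicitly that the codomain is pinned down by disjointness of blocks, since without fixing codomains a ``block map induced by $f$'' with a given domain would not be unique as a formal function. With that caveat handled, the two directions close immediately from the definitions.
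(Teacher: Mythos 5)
Your proof is correct and follows essentially the same route as the paper's: restrict $f$ to each block, use disjointness of blocks to pin down the codomain, and reverse the argument for the converse. The only difference is that you spell out the uniqueness of the family explicitly, whereas the paper asserts it in passing; this is a welcome bit of extra care but not a different argument.
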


\begin{proof}
We first assume that $f\in T(X,\mathcal{P})$, and let $X_i\in\mathcal{P}$. Then there exists $X_j \in \mathcal{P}$ such that $X_if\subseteq X_j$. We subsequently have a block map from $X_i$ to $X_j$ induced by $f$. Denote this induced block map by $f_i$. Since $X_i \in \mathcal{P}$ is an arbitrary block, we therefore obtain a unique indexed family $B(f, I)$ of block maps induced by $f$, where
$B(f, I) = \{f_i \;|\;  f_i \mbox{ is induced by }f \mbox{ and } \mbox{dom}(f_i) = X_i \mbox{ for each }i\in I\}$.

\vspace{0.2cm}
Conversely, suppose that the condition holds. Let $X_i \in \mathcal{P}$. Then $X_i f = X_i f_i$. Since $\mbox{codom}(f_i) \in \mathcal{P}$, we have $X_i f \subseteq X_j$, where $X_j = \mbox{Codom}(f_i)$. Hence $f\in T(X, \mathcal {P})$. This completes the proof.
\end{proof}

\vspace{0.1cm}
\begin{lemma}\label{f-idem}
Let $\mathcal{P} = \{X_i\;|\;i\in I\}$ be a partition of a set $X$, and let $f\in T(X,\mathcal{P})$. If each block map of $B(f, I)$ is an idempotent, then $f$ is an idempotent.
\end{lemma}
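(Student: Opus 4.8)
The plan is to show $f^2 = f$ by checking the two maps agree at each point of $X$, using the block-map decomposition $B(f,I)$ supplied by Lemma~\ref{fam-func}. Fix $x \in X$; then $x \in X_i$ for a unique $i \in I$. Since $f$ restricted to $X_i$ is the block map $f_i \in B(f,I)$, we have $xf = xf_i$. Because $f \in T(X,\mathcal{P})$, the element $xf$ lies in the block $X_j$ where $X_j = \mathrm{codom}(f_i)$; equivalently $j = i\chi^{(f)}$. Applying $f$ again, $xf^2 = (xf)f = (xf)f_j$, since $f$ restricted to $X_j$ is the block map $f_j$.

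The key step is to argue that $f_j$ fixes the point $xf$. Here is where the hypothesis that every block map of $B(f,I)$ is an idempotent enters, but one must be slightly careful: a block map $f_j\colon X_j \to X_k$ is literally a selfmap (hence can be ``an idempotent'' in the usual sense) only when $X_k = X_j$, i.e.\ when $j\chi^{(f)} = j$. So the natural reading — and the one I would adopt — is that ``each block map is an idempotent'' forces $\mathrm{codom}(f_i) = X_i$ for every $i$, so that each $f_i$ is a genuine selfmap of $X_i$ satisfying $f_i^2 = f_i$; in particular $i\chi^{(f)} = i$ for all $i$. Under this reading, $j = i\chi^{(f)} = i$, so $xf \in X_i$, and $xf^2 = (xf)f_i = (xf)f_i^2 = ((xf)f_i)f_i$. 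But $(xf)f_i = (xf_i)f_i = xf_i^2 = xf_i = xf$, and then applying $f_i$ once more and using $f_i^2=f_i$ again gives $xf^2 = xf$. Since $x$ was arbitrary, $f^2 = f$, so $f \in E(T(X,\mathcal{P}))$.

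Alternatively, and perhaps more cleanly, one can invoke the standard characterization recalled in the preliminaries: $f \in \mathcal{T}_X$ is idempotent iff $f$ acts as the identity on $Xf$. Given $y \in Xf$, write $y = xf$ for some $x$; then $y \in X_i f = X_i f_i \subseteq Xf_i \subseteq X_i$, so $y$ lies in the domain $X_i$ of the selfmap $f_i$ and moreover $y \in X_i f_i = X_i f_i^2 = (X_i f_i) f_i$, i.e.\ $y \in (X_i f_i)f_i$. Since $f_i$ is idempotent it is the identity on $X_i f_i$, and $y \in X_i f_i$, whence $yf = yf_i = y$. Thus $f$ fixes $Xf$ pointwise and is therefore idempotent.

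The only real obstacle is the bookkeeping around what ``block map is an idempotent'' means when domain and codomain are a priori different blocks; once one pins down that this forces each $f_i$ to be a selfmap of its own block $X_i$ (so $\chi^{(f)}$ is the identity on $I$), the argument is a short pointwise verification with no genuine difficulty. I would state this domain/codomain clarification explicitly at the start of the proof to keep the rest routine.
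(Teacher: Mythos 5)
Your proof is correct and follows essentially the same route as the paper: both observe that an idempotent block map $f_i$ must be a selfmap of its own block $X_i$, and then verify $xf^2 = xf$ pointwise via $(xf)f = (xf_i)f_i = xf_i^2 = xf_i = xf$. Your explicit remark that this forces $\chi^{(f)}$ to be the identity, and your alternative argument via the ``identity on the image'' characterization, are fine but not needed beyond what the paper does.
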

\begin{proof}
Let $x \in X$. Then $x\in X_i$ for some $i\in I$. Since $f_i \in B(f, I)$ is an idempotent, it follows that $f_i\colon X_i \to X_i$. Therefore
\[x(f^2) = (xf)f = (xf_i)f = (xf_i)f_i = x(f_i^2) = xf_i = xf\]
whence $f$ is an idempotent.
\end{proof}

\vspace{0.1cm}
\begin{lemma}\label{block-idem}
Let $\mathcal{P} = \{X_i\;|\;i\in I\}$ be a partition of a set $X$, and let $f\in E(T(X,\mathcal {P}))$. If $i \in I\chi^{(f)}$, then the block map $f_i \in B(f, I)$ is an idempotent.
\end{lemma}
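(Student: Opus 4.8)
The statement says: if $f$ is an idempotent of $T(X,\mathcal{P})$ and $i \in I\chi^{(f)}$ (i.e.\ $i$ is in the image of the character $\chi^{(f)}$), then the block map $f_i \colon X_i \to X_i$ is an idempotent. The first thing I would do is unwind what $i \in I\chi^{(f)}$ means: there is some $j \in I$ with $j\chi^{(f)} = i$, hence $X_j f \subseteq X_i$. The goal is to show that $f_i$ has codomain $X_i$ (so that "$f_i$ is an idempotent" even makes sense as a statement about a selfmap) and that it fixes its image pointwise.

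The key idea is to exploit idempotency of $f$ together with the fact that $f$ maps $X_j$ into $X_i$. Pick any $x \in X_j$; then $xf \in X_i$. Since $f = f^2$, we have $xf = x f^2 = (xf)f$. So every element of the form $xf$ with $x \in X_j$ is a fixed point of $f$, and it lies in $X_i$. In particular $X_i$ contains at least one fixed point $y = xf$ of $f$, and $y f = y$ means $yf_i = y \in X_i$, so $X_i f_i$ meets $X_i$; since $f \in T(X,\mathcal{P})$ forces $X_i f \subseteq X_k$ for a unique $k$, and that $X_k$ must contain $y = yf \in X_i$, we get $k = i$, i.e.\ $X_i f \subseteq X_i$ and thus $f_i \colon X_i \to X_i$ is genuinely a selfmap (equivalently $i\chi^{(f)} = i$). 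Then for any $z \in X_i$ we have $z f_i = zf$ and $z f^2 = zf$, and since $zf \in X_i$ we may write $z f_i^2 = (zf)f_i = (zf)f = zf^2 = zf = z f_i$, so $f_i^2 = f_i$.

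So the proof is essentially a short computation, and the order of steps is: (1) extract $j$ with $X_j f \subseteq X_i$ from $i \in I\chi^{(f)}$; (2) produce a fixed point of $f$ inside $X_i$ using $f = f^2$; (3) deduce $X_i f \subseteq X_i$, equivalently $i\chi^{(f)} = i$, so $f_i$ is a block selfmap on $X_i$; (4) verify $f_i^2 = f_i$ directly from $f^2 = f$.

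The only real subtlety — and the step I expect to need the most care — is step (3): making sure $f_i$ actually has codomain $X_i$ rather than some other block. Without that, the phrase "$f_i$ is an idempotent" is vacuous. The fixed-point argument handles it cleanly because a fixed point of $f$ lying in $X_i$ pins down the unique block containing $X_i f$ to be $X_i$ itself. (Alternatively one could note that an idempotent $f$ acts as the identity on $Xf$, per the standard fact recalled in Section~2, so $X_i \cap Xf \subseteq \mathrm{Fix}(f)$; combined with $X_j f \subseteq X_i \cap Xf \ne \emptyset$ this again forces $i\chi^{(f)} = i$.) Everything else is routine verification using $f^2 = f$ and the fact that all the relevant images stay inside $X_i$.
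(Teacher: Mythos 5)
Your proposal is correct and follows essentially the same route as the paper: locate a point of $Xf$ in $X_i$, use idempotency of $f$ to see it is fixed, conclude $X_if\subseteq X_i$, and then verify $f_i^2=f_i$ by the same one-line computation. You are in fact slightly more explicit than the paper about why $f_i$ has codomain $X_i$ (the paper dismisses this with ``it follows that''), which is a welcome bit of extra care rather than a divergence.
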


\begin{proof}
If $i \in I\chi^{(f)}$, then we see that $Xf \cap X_i \neq \emptyset$. Let $y\in Xf \cap X_i$. Since $f\in E(T(X,\mathcal {P}))$, we have $yf = y$ (cf. \cite[pp. 6]{clifford61}). If follows that $f_i \colon X_i \to X_i$. Let $x\in X_i$. Then
\[x(f_i^2)= (xf_i)f_i =  (xf)f_i= x(f^2) = xf=xf_i\]
whence the block map $f_i$ is an idempotent.
\end{proof}

\vspace{0.1cm}
\begin{corollary}
Let $\mathcal{P} = \{X_i\;|\;i\in I\}$  be a partition of a set $X$, and let $f\in T(X,\mathcal {P})$. If $f$ is an idempotent, then $\chi^{(f)}$ is an idempotent.
\end{corollary}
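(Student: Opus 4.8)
The plan is to prove that $\chi^{(f)}$ is an idempotent selfmap of $I$ by checking that $i\chi^{(f)}\chi^{(f)} = i\chi^{(f)}$ for every $i\in I$. First I would fix $i\in I$ and put $j = i\chi^{(f)}$, so that by the definition of the character we have $X_i f \subseteq X_j$. The key observation is that $j$, being a value of $\chi^{(f)}$, lies in the image set $I\chi^{(f)}$; hence Lemma \ref{block-idem} applies to the block map $f_j\in B(f,I)$ and tells us that $f_j$ is an idempotent. In particular the codomain of $f_j$ is $X_j$, which means $X_j f = X_j f_j \subseteq X_j$, and therefore $j\chi^{(f)} = j$ by the definition of the character. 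Combining these, $i\chi^{(f)}\chi^{(f)} = j\chi^{(f)} = j = i\chi^{(f)}$, and since $i$ was arbitrary we conclude $(\chi^{(f)})^2 = \chi^{(f)}$.

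As an alternative I could invoke the multiplicativity of the character, namely $\chi^{(fg)} = \chi^{(f)}\chi^{(g)}$ (cf. \cite[Lemma 2.3]{puri16}): applying this with $g = f$ and using $f^2 = f$ gives $\chi^{(f)}\chi^{(f)} = \chi^{(f^2)} = \chi^{(f)}$ immediately. I would most likely present the first, self-contained argument, since it relies only on the lemmas already established in this section and illustrates the role of the block maps $f_i$.

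There is essentially no serious obstacle here; the statement is a short consequence of Lemma \ref{block-idem}. The only point that needs a word of care is the verification that $j = i\chi^{(f)}$ genuinely belongs to $I\chi^{(f)}$ so that Lemma \ref{block-idem} is applicable — but this is immediate from the definition of the image of a map, so the argument is routine.
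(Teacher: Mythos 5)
Your main argument is correct and is essentially the paper's own proof: both reduce the claim to showing $j\chi^{(f)}=j$ for every $j\in I\chi^{(f)}$ via Lemma \ref{block-idem}, the only cosmetic difference being that you unfold the ``identity on its image'' criterion into an explicit computation of $i\chi^{(f)}\chi^{(f)}$. Your alternative via $\chi^{(f^2)}=\chi^{(f)}\chi^{(f)}$ is also valid and slightly slicker, but the argument you say you would present matches the paper's.
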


\begin{proof}
Let $j\in I\chi^{(f)}$. It is sufficient to show that $j\chi^{(f)} = j$ (cf. \cite[pp. 6]{clifford61}).
Since $j\in I\chi^{(f)}$, the block map $f_j \in B(f, I)$ is an idempotent by  Lemma \ref{block-idem}. It follows that $f_j\colon X_j \to X_j$ and subsequently $j\chi^{(f)} = j$. Hence $\chi^{(f)}$ is an idempotent.
\end{proof}

\vspace{0.2cm}
The following theorem which characterizes the idempotents of the semigroup $\Sigma (X,\mathcal {P})$ is a direct consequence of Lemma \ref{f-idem} and Lemma \ref{block-idem}.

\begin{theorem}\label{idem-sigma}
Let $\mathcal{P} = \{X_i\;|\;i\in I\}$ be a partition of a set $X$, and let $f\in \Sigma(X,\mathcal {P})$. Then $f$ is an idempotent if and only if each block map of $B(f, I)$ is an idempotent.
\end{theorem}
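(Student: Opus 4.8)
The plan is to obtain both implications directly from the two preceding lemmas, using only the additional input of the characterization of $\Sigma(X,\mathcal{P})$ provided by Theorem~\ref{chi-1}.

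First I would dispose of the ``if'' direction. Assume every block map $f_i \in B(f, I)$ is an idempotent. This is exactly the hypothesis of Lemma~\ref{f-idem}, so $f$ is an idempotent. It is worth remarking that this implication does not use $f \in \Sigma(X,\mathcal{P})$ at all; it holds for every $f \in T(X,\mathcal{P})$.

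For the ``only if'' direction, suppose $f \in \Sigma(X,\mathcal{P})$ is an idempotent. The key step is to note that, since $f \in \Sigma(X,\mathcal{P})$, Theorem~\ref{chi-1} gives that $\chi^{(f)}$ is surjective, that is, $I\chi^{(f)} = I$. Consequently every $i \in I$ satisfies $i \in I\chi^{(f)}$, so Lemma~\ref{block-idem} (applied to each such $i$, using that $f$ is an idempotent of $T(X,\mathcal{P})$) yields that every block map $f_i \in B(f, I)$ is an idempotent. This completes the argument.

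I do not anticipate any real obstacle, since the substantive work has already been packaged into Lemmas~\ref{f-idem} and \ref{block-idem}; the proof is essentially a matter of citing them correctly. The one point I would make explicit is \emph{why} membership in $\Sigma(X,\mathcal{P})$, rather than merely in $T(X,\mathcal{P})$, is needed in the ``only if'' direction: for a general idempotent $f \in T(X,\mathcal{P})$, Lemma~\ref{block-idem} forces $f_i$ to be idempotent only when $i$ lies in the image $I\chi^{(f)}$, and blocks outside this image may well carry block maps that are not idempotent. Surjectivity of $\chi^{(f)}$, which by Theorem~\ref{chi-1} is equivalent to $f \in \Sigma(X,\mathcal{P})$, is precisely what closes this gap.
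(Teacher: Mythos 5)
Your proof is correct and follows exactly the route the paper intends: the paper states the theorem as "a direct consequence of Lemma~\ref{f-idem} and Lemma~\ref{block-idem}," and your argument supplies the missing detail that surjectivity of $\chi^{(f)}$ (via Theorem~\ref{chi-1}) lets Lemma~\ref{block-idem} cover every block. Your closing remark on why $\Sigma(X,\mathcal{P})$-membership, rather than mere $T(X,\mathcal{P})$-membership, is needed is a worthwhile clarification that the paper leaves implicit.
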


\vspace{0.1cm}
Theorem \ref{idem-sigma} yields the following.

\begin{corollary}
Let $\mathcal{P} = \{X_i \;|\; i \in I\}$ be a partition of a set $X$, and let $f \in \Sigma(X, \mathcal{P})$. If $f$ is an idempotent, then $\chi^{(f)}$ is the identity map.
\end{corollary}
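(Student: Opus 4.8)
The plan is to leverage the extra information that $f$ lies in $\Sigma(X,\mathcal{P})$ rather than merely in $T(X,\mathcal{P})$: by Theorem \ref{chi-1}, membership in $\Sigma(X,\mathcal{P})$ is equivalent to $\chi^{(f)}$ being surjective, which in the present notation says $I\chi^{(f)} = I$. This is exactly the hypothesis needed to apply Lemma \ref{block-idem} to \emph{every} index.

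Concretely, I would argue as follows. Let $i\in I$ be arbitrary. Since $f\in\Sigma(X,\mathcal{P})$, Theorem \ref{chi-1} gives that $\chi^{(f)}$ is surjective, so $i\in I\chi^{(f)}$. As $f$ is an idempotent of $T(X,\mathcal{P})$, Lemma \ref{block-idem} then tells us that the block map $f_i\in B(f,I)$ is an idempotent, and hence $f_i\colon X_i\to X_i$; that is, $X_i f\subseteq X_i$. By the definition of the character this means $i\chi^{(f)}=i$. Since $i$ was arbitrary, $\chi^{(f)}$ is the identity map on $I$.

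Alternatively, and slightly more slickly, one can combine the immediately preceding corollary with Theorem \ref{chi-1}: the preceding corollary shows $\chi^{(f)}$ is an idempotent selfmap of $I$, so it fixes its image pointwise; Theorem \ref{chi-1} shows that image is all of $I$; hence $\chi^{(f)}$ is the identity. I would probably present the first version, as it keeps the proof self-contained within this section.

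There is no real obstacle here — this is a direct consequence of the machinery already in place. The only point worth flagging is conceptual rather than technical: without the $\Sigma(X,\mathcal{P})$ hypothesis, Lemma \ref{block-idem} only controls the block maps indexed by $I\chi^{(f)}$, so $\chi^{(f)}$ need only be an idempotent (as in the earlier corollary); it is precisely surjectivity of $\chi^{(f)}$, supplied by Theorem \ref{chi-1}, that upgrades "idempotent character" to "identity character."
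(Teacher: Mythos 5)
Your proof is correct and takes essentially the same route as the paper: the paper simply cites Theorem \ref{idem-sigma} to get $f_i\colon X_i\to X_i$ for every $i\in I$, which packages exactly the Theorem \ref{chi-1} (surjectivity of $\chi^{(f)}$) plus Lemma \ref{block-idem} argument you spell out. Both proofs then conclude $X_if\subseteq X_i$, hence $i\chi^{(f)}=i$ for all $i$.
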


\begin{proof}
Let $i\in I$. If $f \in \Sigma(X, \mathcal{P})$ is an idempotent, then $f_i\colon X_i \to X_i$ by Theorem \ref{idem-sigma}.
It follows that $X_i f = X_i f_i \subseteq X_i$. By definition of $\chi^{(f)}$, we then have $i\chi^{(f)} = i$. Hence $\chi^{(f)}$ is the identity map on $I$.
\end{proof}


The next theorem provides a necessary and sufficient condition for a map of $T(X, \mathcal{P})$ that belongs to $S(X, \mathcal{P})$.
\begin{theorem}\label{char-sxp}
Let $\mathcal{P} = \{X_i \;|\; i \in I\}$ be a partition of an arbitrary set $X$, and let $f \in T(X, \mathcal{P})$. Then $f \in S(X, \mathcal{P})$ if and only if
\begin{enumerate}
\item[\rm(i)] each block map of $B(f, I)$ is a bijective map; and
\item[\rm(ii)] $\chi^{(f)}$ is a bijective map.
\end{enumerate}
\end{theorem}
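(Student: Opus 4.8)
The plan is to prove both implications directly from the definition of the group of units, using Lemma~\ref{fam-func} to translate between $f$ and its family of block maps $B(f,I)$, and Lemma~\ref{image-set-block} for the forward implication. I would begin by recording the elementary observation that, since the identity map $\mathrm{id}_X$ is the identity of $T(X,\mathcal{P})$, a map $f\in T(X,\mathcal{P})$ lies in $S(X,\mathcal{P})$ exactly when $f$ is a bijection of $X$ whose set-inverse $f^{-1}$ again preserves $\mathcal{P}$.

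For the forward direction, assume $f\in S(X,\mathcal{P})$. Then $f$ is a bijection of $X$, and Lemma~\ref{image-set-block}(i) gives $X_if\in\mathcal{P}$ for every $i\in I$; together with the facts that $X_if\subseteq X_{i\chi^{(f)}}$ and that two blocks are equal or disjoint, this forces $X_if = X_{i\chi^{(f)}}$. Consequently each block map $f_i\in B(f,I)$ --- which has domain $X_i$ and codomain $X_{i\chi^{(f)}}$ --- is surjective, and it is injective because $f$ is, giving (i). For (ii), surjectivity of $\chi^{(f)}$ follows from $S(X,\mathcal{P})\subseteq\Sigma(X,\mathcal{P})$ and Theorem~\ref{chi-1}, while injectivity follows because $i\chi^{(f)}=k\chi^{(f)}$ yields $X_if=X_kf$ by the equality just noted, hence $X_i=X_k$ since $f$ is injective, hence $i=k$.

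For the converse, assume (i) and (ii). First I would show $f$ is a bijection of $X$. Injectivity: if $xf=yf$ with $x\in X_i$ and $y\in X_k$, then $xf\in X_{i\chi^{(f)}}$ and $yf\in X_{k\chi^{(f)}}$ intersect, so $i\chi^{(f)}=k\chi^{(f)}$, whence $i=k$ by injectivity of $\chi^{(f)}$ and then $x=y$ by injectivity of $f_i$. Surjectivity: given $y$ in a block $X_j$, choose $i$ with $i\chi^{(f)}=j$ (possible since $\chi^{(f)}$ is onto); as $f_i\colon X_i\to X_j$ is onto, $y=xf$ for some $x$. Then I would check $f^{-1}\in T(X,\mathcal{P})$: for a block $X_j$, take the unique $i$ with $i\chi^{(f)}=j$; since $f_i$ is onto we get $X_if=X_j$, hence $X_jf^{-1}=X_i\in\mathcal{P}$, so in particular $X_jf^{-1}$ lies in a single block. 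Therefore $f^{-1}$ preserves $\mathcal{P}$ and $f\in S(X,\mathcal{P})$.

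The only delicate point --- and the one I would be most careful about --- is the bookkeeping with codomains: a block map $f_i$ always has codomain the block $X_{i\chi^{(f)}}$, but in $T(X,\mathcal{P})$ it need not be surjective onto it. Hypothesis (i) (and, dually, Lemma~\ref{image-set-block}(i) in the forward direction) is precisely what upgrades the inclusion $X_if\subseteq X_{i\chi^{(f)}}$ to the equality $X_if=X_{i\chi^{(f)}}$, and that equality is exactly what makes $f^{-1}$ send each block onto a block. Everything else is routine manipulation of blocks and the character.
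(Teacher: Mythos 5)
Your proof is correct and follows essentially the same route as the paper's: Lemma~\ref{image-set-block}(i) upgrades $X_if\subseteq X_{i\chi^{(f)}}$ to equality in the forward direction, Theorem~\ref{chi-1} gives surjectivity of $\chi^{(f)}$, and the converse assembles injectivity and surjectivity of $f$ block by block. The one place where you genuinely go beyond the paper is the final step of the converse. The paper asserts that ``it is sufficient to prove that $f$ is a bijection,'' but for an arbitrary (possibly infinite) $X$ this is not sufficient on its own: a bijection lying in $T(X,\mathcal{P})$ need not have its inverse in $T(X,\mathcal{P})$. For instance, with singleton blocks $S_n=\{s_n\}$ and two-element blocks $D_n=\{d_n,d_n'\}$ for $n\ge 1$, the map sending $s_1\mapsto d_1$, $s_2\mapsto d_1'$, $s_{n+2}\mapsto s_n$, $d_n\mapsto d_{n+1}$, $d_n'\mapsto d_{n+1}'$ is a partition-preserving bijection whose inverse sends $D_1$ onto $\{s_1,s_2\}$, which meets two blocks. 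Your explicit verification that $X_jf^{-1}=X_i\in\mathcal{P}$ for the unique $i$ with $i\chi^{(f)}=j$ (using both the injectivity of $\chi^{(f)}$ and the surjectivity of $f_i$) is exactly what is needed to conclude $f^{-1}\in T(X,\mathcal{P})$ and hence $f\in S(X,\mathcal{P})$; this closes a real gap in the published argument rather than merely restating it.
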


\begin{proof}
Assume that $f \in S(X, \mathcal{P})$. Let $f_i \in B(f, I)$. It is clear that the block map $f_i$ is injective. If $\mbox{Codom}(f_i) = X_j$, then $X_i f = X_i f_i \subseteq X_j$. Since $f\in S(X,\mathcal{P})$, we have $X_i f_i \in \mathcal{P}$ by Lemma \ref{image-set-block}(i). We then have $X_i f_i  = X_j$ and so $f_i$ is surjective. Since $f_i \in B(f, I)$ is arbitrary, this proves (i).

\vspace{0.2cm}
We next prove (ii). Note that $S(X,\mathcal{P})\subseteq \Sigma(X,\mathcal{P})$. Since $f \in S(X,\mathcal{P})$, by Theorem \ref{chi-1}, the map $\chi^{(f)}$ is surjective. On the contrary, suppose that there exist two distinct elements $i, j \in I$ such that $i\chi ^{(f)}=j\chi ^{(f)}$, say equal to $k$ for some $k\in I$. Then $X_if \subseteq X_k$ and $X_jf \subseteq X_k$ by definition of $\chi ^{(f)}$. Since $f\in S(X, \mathcal{P})$, by Lemma \ref{image-set-block}(i), we have $X_if = X_k$ and $X_jf = X_k$ and subsequently $X_i = X_j$. This is a contradiction since $i\neq j$. Hence  the map $\chi^{(f)}$ is bijective.

\vspace{0.2cm}
Conversely, suppose that the conditions hold. It is sufficient to prove that $f$ is a bijection. We first prove that $f$ is injective. Let $x, y\in X$ and suppose that $xf=yf$. Then $xf, yf \in X_j$ for some $j\in I$. By (ii), there exists $i \in I$ such that $x, y \in X_i$. By (i), we know that $f_i$ is injective. We thus obtain $xf=yf\implies xf_i=yf_i\implies x=y$. Therefore $f$ is injective.

\vspace{0.2cm}
We now prove that $f$ is surjective. Let $y\in X$. Then $y\in X_j$ for some $j\in I$. By (ii), there exists $i\in I$ such that $i\chi^{(f)}=j$ and so $X_i f \subseteq X_j$. It follows that $f_i\colon X_i\to X_j$. By (i), there exists $x\in X_i$ such that $xf_i=y$ and subsequently $xf=y$. Therefore $f$ is surjective. This completes the proof.
\end{proof}

\vspace{0.2cm}

When $X$ is a finite set, the following theorem counts the number of the idempotents in the semigroup $\Sigma(X, \mathcal{P})$.

\begin{theorem}
Let $\mathcal{P}= \{X_1, \ldots, X_m\}$ be an $(m, k)$-partition of a finite set $X$ such that $\mathcal{P}$ has exactly $m_i\ge 1$ blocks of size $n_i\ge 1$ for each $i\in I_k$. Then
\[\big|E\big(\Sigma(X,\mathcal{P})\big)\big| = \prod_{i=1}^k{\bigg(\sum _{j=1}^{n_i}{n_i\choose j}j^{n_i-j}\bigg)^{m_i}}.\]
\end{theorem}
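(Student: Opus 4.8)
The plan is to reduce the computation to a product over the blocks, using the characterization of idempotents obtained in Theorem~\ref{idem-sigma}. First I would note that, by Theorem~\ref{idem-sigma}, a map $f \in \Sigma(X,\mathcal{P})$ is an idempotent if and only if every block map $f_i \in B(f, I_m)$ is an idempotent; and an idempotent block map $f_i$ is necessarily a selfmap of $X_i$, so in that case $\chi^{(f)}$ is the identity on $I_m$. Conversely, given any choice, one for each $i \in I_m$, of an idempotent selfmap $e_i$ of $X_i$, the map $f\colon X \to X$ defined by $xf = xe_i$ for $x \in X_i$ lies in $T(X,\mathcal{P})$ with $\chi^{(f)}$ equal to the identity on $I_m$ (hence $\chi^{(f)}$ is surjective, so $f \in \Sigma(X,\mathcal{P})$ by Theorem~\ref{chi-1}), and by Lemma~\ref{f-idem} $f$ is an idempotent whose associated family $B(f, I_m)$ of block maps is exactly $\{e_i \mid i \in I_m\}$. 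By the uniqueness clause of Lemma~\ref{fam-func}, the assignment $f \mapsto B(f, I_m)$ is therefore a bijection from $E(\Sigma(X,\mathcal{P}))$ onto the Cartesian product $\prod_{i \in I_m} E(\mathcal{T}_{X_i})$.

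Consequently $\bigl|E(\Sigma(X,\mathcal{P}))\bigr| = \prod_{i \in I_m} \bigl|E(\mathcal{T}_{X_i})\bigr|$. Since there are $m_i$ blocks of size $n_i$ for each $i \in I_k$, and since $\bigl|E(\mathcal{T}_{X_i})\bigr|$ depends only on $|X_i|$, this product equals $\prod_{i=1}^k \bigl|E(T_{n_i})\bigr|^{m_i}$. It then remains to insert the classical count of idempotents of a full transformation semigroup on a finite set: an idempotent $e$ on an $n$-element set $Y$ is determined by its image $Ye$, which may be any nonempty $j$-element subset of $Y$ for some $j$ with $1 \le j \le n$, by the requirement that $e$ fix every point of $Ye$, and by an arbitrary map sending each of the remaining $n-j$ points into $Ye$; counting these choices gives $|E(T_n)| = \sum_{j=1}^{n} \binom{n}{j} j^{\,n-j}$. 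Substituting $n = n_i$ produces the stated formula.

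All the verifications needed along the way — that the map $f$ built from a family $(e_i)_{i\in I_m}$ is well defined, lies in $T(X,\mathcal{P})$, has the stated character, and has $B(f, I_m) = \{e_i \mid i \in I_m\}$ — are immediate from the definitions and from Lemma~\ref{fam-func}, so I do not expect a genuine obstacle. The only step carrying real content is the bijection between $E(\Sigma(X,\mathcal{P}))$ and independent block-wise choices of idempotents, and even this is essentially assembled from Theorem~\ref{idem-sigma}, Lemma~\ref{f-idem}, Theorem~\ref{chi-1}, and the uniqueness in Lemma~\ref{fam-func}; the main point to watch is that the correspondence is simultaneously injective and surjective. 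If one prefers not to quote $|E(T_n)| = \sum_{j=1}^{n} \binom{n}{j} j^{\,n-j}$, the short double-counting argument above can be recorded as a preliminary lemma.
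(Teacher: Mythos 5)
Your proposal is correct and follows essentially the same route as the paper: reduce via Theorem~\ref{idem-sigma} and Lemma~\ref{fam-func} to independent block-wise choices of idempotent selfmaps, then insert the classical count $\sum_{j=1}^{n}\binom{n}{j}j^{n-j}$ of idempotents of $T_n$ and group blocks by size. Your explicit verification that the assembled map lies in $\Sigma(X,\mathcal{P})$ (via $\chi^{(f)}$ being the identity and Theorem~\ref{chi-1}) is a small point the paper leaves implicit, but the argument is the same.
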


\begin{proof}
Note that each map $f \in \Sigma(X, \mathcal{P})$ is uniquely determined by the $m$-family $B(f, I_m)$ of block maps (cf. Lemma \ref{fam-func}).
From Theorem \ref{idem-sigma}, we know that a map $f\in \Sigma(X,\mathcal{P})$ is an idempotent if and only if each block map $f_i\in B(f, I_m)$ is an idempotent. It is therefore sufficient to count the total number of such $m$-families $B(f, I_m)$ of idempotent block maps. To count it, we break up the problem into $k$ subfamilies depending on the domain sizes of block maps.

\vspace{0.2cm}
Let $i\in I_k$. Since $\mathcal{P}$ has $m_i$ blocks of size $n_i$, we begin by counting the number of possible $m_i$-subfamilies of idempotent block maps from  $m_i$ distinct blocks of size $n_i$. Note that the total number of idempotents in the full transformation semigroup on an $n$-element set is $\sum _{j=1}^{n}{n\choose j}j^{n-j}$ (cf. \cite[Corollary 2.7.4]{gan-maz09}). Moreover, any map which is an idempotent must be selfmap. Therefore, the number of possible idempotent block maps from a block of size $n_i$ is $\sum_{j=1}^{n_i}{n_i\choose j}j^{n_i-j}$.

\vspace{0.2cm}
Recall that $\mathcal{P}$ has $m_i$ blocks of size $n_i$, by the multiplication principle, the total number of possible $m_i$-subfamilies of idempotent block maps from $m_i$ distinct blocks of size $n_i$ is $\Big(\sum_{j=1}^{n_i}{n_i\choose j}j^{n_i-j}\Big)^{m_i}$. Since $\mathcal{P}$ has exactly $k$ different size blocks and $i\in I_k$ is arbitrarily choosen element, the total number of possible $m$-families of idempotent block maps is now follows by applying the multiplication principle. This completes the proof.
\end{proof}


\section{The cardinality of  $T(X, \mathcal{P})$, $\Sigma(X, \mathcal{P})$, and $S(X, \mathcal{P})$}

This section calculates the size of $T(X, \mathcal{P})$, $S(X, \mathcal{P})$, and $\Sigma(X, \mathcal{P})$, respectively  when $X$ is a finite set. We begin by calculating the cardinality of the semigroup $T(X, \mathcal{P})$.

\begin{theorem}
Let $\mathcal{P}= \{X_1, \ldots, X_m\}$ be an $(m, k)$-partition of a finite set $X$ such that $\mathcal{P}$ has exactly $m_i\ge 1$ blocks of size $n_i\ge 1$ for each $i\in I_k$. Then
\[|T(X,\mathcal{P})|=\prod _{i=1}^k{\bigg(\sum _{j=1}^{k}m_jn_j^{n_i}\bigg)^{m_i}}.\]
\end{theorem}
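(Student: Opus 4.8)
The plan is to count $T(X,\mathcal{P})$ through the correspondence with families of block maps furnished by Lemma~\ref{fam-func}. That lemma says precisely that $f \mapsto B(f, I_m)$ is a well-defined map from $T(X,\mathcal{P})$ onto the set of indexed families $\{f_i \mid i \in I_m\}$ in which each $f_i$ is a block map with $\mbox{dom}(f_i) = X_i$, and that conversely any such family arises from a unique $f \in T(X,\mathcal{P})$, namely the map defined blockwise by $xf = xf_i$ for $x \in X_i$. Hence $|T(X,\mathcal{P})|$ equals the number of indexed families of block maps $(f_1, \ldots, f_m)$ with $\mbox{dom}(f_i) = X_i$. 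Since the $f_i$ may be chosen independently of one another — the only constraint on a block map, that its image lie inside a single block, is built into the notion — the multiplication principle reduces the task to finding, for each $i \in I_m$, the number $N_i$ of block maps with domain $X_i$; then $|T(X,\mathcal{P})| = \prod_{i=1}^m N_i$.

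Next I would compute $N_i$. A block map with domain $X_i$ is an ordinary map from $X_i$ into some block $X_{j'}$ of $\mathcal{P}$, and the number of maps from $X_i$ into $X_{j'}$ is $|X_{j'}|^{|X_i|}$; as $X_{j'}$ ranges over the $m$ blocks of $\mathcal{P}$ these collections of maps are pairwise disjoint (a nonempty-domain map cannot take all its values in two disjoint blocks at once), so $N_i = \sum_{j'=1}^m |X_{j'}|^{|X_i|}$. Grouping the blocks by size — there are $m_j$ blocks of size $n_j$ for each $j \in I_k$ — this becomes $N_i = \sum_{j=1}^k m_j n_j^{|X_i|}$, which depends on $X_i$ only through its size.

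Finally I would assemble the product $\prod_{i=1}^m N_i$ by grouping its factors according to block size: for each $i \in I_k$ there are exactly $m_i$ blocks of size $n_i$, and each such block contributes the factor $\sum_{j=1}^k m_j n_j^{n_i}$, so
\[|T(X,\mathcal{P})| = \prod_{i=1}^k \Big(\sum_{j=1}^k m_j n_j^{n_i}\Big)^{m_i},\]
as asserted.

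There is no serious obstacle here; the argument is pure bookkeeping resting on Lemma~\ref{fam-func}. The only points demanding care are (a) checking that $f \mapsto B(f, I_m)$ is genuinely a bijection onto the set of such families, so that counting indexed families is the same as counting maps in $T(X,\mathcal{P})$, and (b) keeping the indices straight — the inner sum runs over the $k$ distinct block \emph{sizes}, weighted by their multiplicities $m_j$, while the outer product also runs over the $k$ distinct sizes, with exponents $m_i$; conflating ``number of blocks'' with ``number of distinct sizes'' anywhere would break the formula.
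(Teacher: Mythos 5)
Your proposal is correct and follows essentially the same route as the paper: both count indexed families of block maps via Lemma~\ref{fam-func}, obtain $\sum_{j=1}^{k} m_j n_j^{n_i}$ choices for a block map whose domain has size $n_i$ by the addition principle, and then apply the multiplication principle over the blocks grouped by size. Your remarks on the bijectivity of $f \mapsto B(f, I_m)$ and on the disjointness of the codomain choices make explicit two small points the paper leaves implicit, but the argument is the same.
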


\begin{proof}
Note that each map $f \in  T(X, \mathcal{P})$ is uniquely determined by the $m$-family $B(f, I_m)$ of block maps (cf. Lemma \ref{fam-func}). Therefore, it is sufficient to count the total number of such $m$-families $B(f, I_m)$ of block maps. To count it, we break up the problem into $k$ subfamilies depending on the domain sizes of block maps.

\vspace{0.2cm}
Let $i\in I_k$. Since $\mathcal{P}$ has $m_i$ blocks of size $n_i$, we begin by counting the number of possible $m_i$-subfamilies of block maps from $m_i$ distinct blocks of size $n_i$. Clearly, the codomain of a block map from a block of size $n_i$ can be any block of $\mathcal{P}$. Note that the number of maps from an $n$-element set into an $t$-element set is $t^{n}$. Therefore, the number of possible block maps from a block of size $n_i$ is $\sum_{j=1}^{k} m_j n_j^{n_i}$ by the addition principle.

\vspace{0.2cm}
Recall that $\mathcal{P}$ has $m_i$ blocks of size $n_i$, by the multiplication principle, the total number of possible $m_i$-subfamilies of block maps from $m_i$ distinct blocks of size $n_i$ is $\Big(\sum_{j=1}^{k} m_jn_j^{n_i}\Big)^{m_i}$. Since $\mathcal{P}$ has exactly $k$ different size blocks and $i\in I_k$ is arbitrarily choosen element, the total number of possible $m$-families of block maps is
now follows by applying the multiplication principle. This completes the proof.
\end{proof}

\vspace{0.2cm}
The next theorem calculate the cardinality of the group of units $S(X, \mathcal{P})$ of the semigroup $T(X, \mathcal{P})$.

\begin{theorem}
Let $\mathcal{P}= \{X_1, \ldots, X_m\}$ be an $(m, k)$-partition of a finite set $X$ such that $\mathcal{P}$ has exactly $m_i\ge 1$ blocks of size $n_i\ge 1$ for each $i\in I_k$. Then \[|S(X, \mathcal{P})| = \prod_{i = 1}^k (m_i!)(n_i!)^{m_i}.\]
\end{theorem}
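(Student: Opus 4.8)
The plan is to use the characterization of $S(X, \mathcal{P})$ given in Theorem \ref{char-sxp}: a map $f \in T(X, \mathcal{P})$ lies in $S(X, \mathcal{P})$ if and only if every block map $f_i \in B(f, I_m)$ is bijective and $\chi^{(f)}$ is a bijective map. Since $X$ is finite, a bijective block map from $X_i$ can only have a codomain block of the same size, and $\chi^{(f)}$ being bijective forces $\chi^{(f)}$ to be a permutation of $I_m$ that permutes indices only within size-classes (by Lemma \ref{image-set-block}(ii), $i\chi^{(f)} = j$ implies $|X_i| = |X_j|$). Thus the data of an element $f \in S(X, \mathcal{P})$ decomposes independently across the $k$ size-classes: for each $i \in I_k$ we must choose a permutation $\sigma_i$ of the $m_i$ blocks of size $n_i$ (this is the restriction of $\chi^{(f)}$ to that size-class), and for each of those $m_i$ blocks we must choose a bijection from that block to its image block under $\sigma_i$, i.e. a bijection between two $n_i$-element sets.

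First I would invoke Lemma \ref{fam-func} to note that $f$ is uniquely determined by the family $B(f, I_m)$ together with the implicit codomain assignment, i.e. by $\chi^{(f)}$ and the block maps. Then I would invoke Theorem \ref{char-sxp} to say that $f \in S(X, \mathcal{P})$ precisely when all $f_i$ are bijections and $\chi^{(f)}$ is a bijection. Next I would observe, using Lemma \ref{image-set-block}(ii) (or directly, since a bijection forces equal cardinalities for finite sets), that $\chi^{(f)}$ must fix each size-class $\{X_j : |X_j| = n_i\}$ setwise, hence restricts to a permutation on each such class. Conversely, any choice of a permutation of the blocks within each size-class, together with, for each block, a bijection onto the designated image block, assembles (again via Lemma \ref{fam-func}) to a well-defined element of $T(X, \mathcal{P})$ which satisfies conditions (i) and (ii) of Theorem \ref{char-sxp} and hence lies in $S(X, \mathcal{P})$. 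This sets up a bijection between $S(X, \mathcal{P})$ and the product, over $i \in I_k$, of (permutations of an $m_i$-element set) $\times$ (the $m_i$-fold product of bijections between $n_i$-element sets).

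Finally I would count: the number of permutations of the $m_i$ blocks of size $n_i$ is $m_i!$, and the number of bijections between two $n_i$-element sets is $n_i!$, so for the $m_i$ blocks in that size-class there are $(n_i!)^{m_i}$ choices of block bijections. By the multiplication principle within size-class $i$ we get $(m_i!)(n_i!)^{m_i}$, and since the $k$ size-classes are independent, another application of the multiplication principle gives
\[
|S(X, \mathcal{P})| = \prod_{i=1}^k (m_i!)(n_i!)^{m_i},
\]
as claimed. The main thing to be careful about — the only real obstacle — is verifying that the reassembly map really does land in $S(X, \mathcal{P})$ and that the correspondence is genuinely a bijection: one must check both that distinct choices of $(\sigma_i)$ and block bijections give distinct $f$ (immediate from Lemma \ref{fam-func} since the block maps and their codomains are recoverable from $f$) and that condition (ii) of Theorem \ref{char-sxp} is met, namely that the global $\chi^{(f)}$ obtained by gluing the $\sigma_i$ is bijective — which it is, being a disjoint union of permutations of the size-classes. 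Everything else is a routine application of the addition and multiplication principles.
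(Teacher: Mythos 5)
Your proposal is correct and follows essentially the same route as the paper: decompose by size-classes via Lemma \ref{image-set-block} and Theorem \ref{char-sxp}, count $m_i!$ permutations of the $m_i$ blocks of size $n_i$ and $(n_i!)^{m_i}$ block bijections for each class, and multiply over the $k$ classes. Your additional verification that every assembled family of block bijections actually lands in $S(X,\mathcal{P})$ is a welcome extra degree of care that the paper leaves implicit, but it does not change the argument.
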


\begin{proof}
Consider the equivalence relation $\sim$ on the partition $\mathcal{P}$ defined by $P \sim Q$ if and only if $|P| = |Q|$. Clearly, there are exactly  $k$ equivalence classes under the equivalence $\sim$. Let $[X_1], \ldots, [X_k]$ be the equivalence classes under $\sim$, where $|X_i| = n_i$.
Note that $|[X_i]| = m_i$ for each $i= 1,\ldots, k$.

\vspace{0.2cm}

Let $i\in I_k$. Consider the class $[X_i]$. By Lemma \ref{image-set-block} and Theorem \ref{char-sxp}(ii), we first note that the images of two distinct blocks in $[X_i]$ under a map of $S(X, \mathcal{P})$ must be distinct blocks in $[X_i]$. If $f\in S(X, \mathcal{P})$, then there are $m_i$ choices for the image of the first block of $[X_i]$ under $f$, the remaining $(m_i-1)$ choices for the image of the second block of $[X_i]$ under $f$, etc. For the last block of $[X_i]$, there is exactly one choice under $f$. Therefore, by the multiplication principle, all $m_i$ blocks of the class $[X_i]$ can be mapped in $m_i !$ different ways.

\vspace{0.2cm}

Note that the number of bijections between any two $n$-element sets is $n !$. Therefore, among $m_i !$ different choices, each fixed choice gives $(n_i !)^{m_i}$ bijections that preserve all blocks of $[X_i]$ by the multiplication principle. Hence, the total number of bijections that preserve all blocks of the class $[X_i]$ is $(m_i!)(n_i !)^{m_i}$.

\vspace{0.2cm}
Since there are exactly $k$ equivalence classes and $[X_i]$ is arbitrarily choosen class, one can obtain the stated formula of $|S(X, \mathcal{P})|$ by the multiplication principle.
\end{proof}

\vspace{0.2cm}

The following theorem calculate the cardinality of the semigroup $\Sigma(X,\mathcal{P})$.

\begin{theorem}
Let $\mathcal{P}= \{X_1, \ldots, X_m\}$ be an $(m, k)$-partition of a finite set $X$ such that $\mathcal{P}$ has exactly $m_i\ge 1$ blocks of size $n_i\ge 1$ for each $i\in I_k$. Then
\begin{equation*}
\begin{split}
|\Sigma(X,\mathcal{P})|&= m_1!\ldots m_k!\sum n_1^{s(m_1)}\ldots n_k^{s(m_k)},
\end{split}
\end{equation*}
where the sum runs over all $k$-tuple of
\[A := \big\{(t_{m_1}, \ldots, t_{m_k})\;|\;\forall i\in I_k,\; t_{m_i} = (l_1, \ldots, l_{m_i}), l_{j}\in \{n_1,  \ldots, n_k\}\big\}\]
such that the components of all $t_{m_i}$'s of a $k$-tuple of $A$ form
$\{m_1\cdot n_1, \ldots, m_k\cdot n_k\}$; each $s(m_i)$ is the sum of all components of $t_{m_i}$ in a $k$-tuple of $A$.
\end{theorem}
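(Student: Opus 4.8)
\emph{Proof proposal.} The plan is to first rewrite $|\Sigma(X,\mathcal{P})|$ as a sum over permutations of the index set, and then reorganise that sum. Recall from Lemma \ref{fam-func} that a map $f\in T(X,\mathcal{P})$ is uniquely determined by its family $B(f,I_m)=(f_i)_{i\in I_m}$ of block maps, where $\mathrm{dom}(f_i)=X_i$ and $\mathrm{codom}(f_i)=X_{i\chi^{(f)}}$, and recall from Theorem \ref{chi-1} together with Corollary \ref{4-equi-sigma} that $f\in\Sigma(X,\mathcal{P})$ precisely when $\chi^{(f)}$ is a permutation of $I_m$. Hence $\Sigma(X,\mathcal{P})$ is the disjoint union, over $\sigma\in S_m$, of the sets $\{f\in T(X,\mathcal{P})\mid\chi^{(f)}=\sigma\}$. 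For a fixed $\sigma\in S_m$, a map $f$ with $\chi^{(f)}=\sigma$ amounts to a free choice, for each $i$, of a map $f_i\colon X_i\to X_{i\sigma}$ (any such choice yields $X_if=X_if_i\subseteq X_{i\sigma}$, and since blocks are nonempty and pairwise disjoint this forces $\chi^{(f)}=\sigma$); so there are $\prod_{i=1}^{m}|X_{i\sigma}|^{|X_i|}$ of them. Therefore
\[
|\Sigma(X,\mathcal{P})|=\sum_{\sigma\in S_m}\ \prod_{i=1}^{m}|X_{i\sigma}|^{|X_i|}.
\]

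Next I would reorganise this sum according to which size class each block is mapped into. For $b\in I_k$ put $J_b=\{i\in I_m\mid |X_i|=n_b\}$, so $|J_b|=m_b$, and fix an ordering of each $J_b$. Given $\sigma\in S_m$, the set $K_b(\sigma):=J_b\sigma^{-1}$ of indices of blocks sent into size-$n_b$ blocks has $m_b$ elements and $\sigma$ restricts to a bijection $K_b(\sigma)\to J_b$; ordering $K_b(\sigma)$ via this bijection, let $t_{m_b}(\sigma)$ be the resulting ordered tuple of sizes $|X_i|$, $i\in K_b(\sigma)$. Since $\sigma$ is a permutation of $I_m$, the tuples $t_{m_1}(\sigma),\ldots,t_{m_k}(\sigma)$ together contain the value $n_a$ exactly $m_a$ times for each $a$, so $(t_{m_1}(\sigma),\ldots,t_{m_k}(\sigma))\in A$. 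Grouping the factors of $\prod_i|X_{i\sigma}|^{|X_i|}$ by the size class of the image gives
\[
\prod_{i=1}^{m}|X_{i\sigma}|^{|X_i|}=\prod_{b=1}^{k}\ \prod_{i\in K_b(\sigma)}n_b^{|X_i|}=\prod_{b=1}^{k}n_b^{\,s(m_b)},
\]
so the summand depends only on the associated $k$-tuple in $A$.

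It then remains to count, for a fixed $(t_{m_1},\ldots,t_{m_k})\in A$, how many $\sigma\in S_m$ give this $k$-tuple. Such a $\sigma$ is specified by choosing, for each position in each $t_{m_b}$, a distinct block of the size prescribed by that component to serve as the source; the total number of positions demanding a size-$n_a$ source is exactly $m_a$ by the defining condition on $A$, and there are exactly $m_a$ blocks of that size, so these choices amount to one bijection for each $a\in I_k$, giving $m_a!$ possibilities, independently over $a$. Hence each $k$-tuple in $A$ is realised by exactly $m_1!\cdots m_k!$ permutations, and
\[
|\Sigma(X,\mathcal{P})|=\sum_{(t_{m_1},\ldots,t_{m_k})\in A}(m_1!\cdots m_k!)\,n_1^{\,s(m_1)}\cdots n_k^{\,s(m_k)}=m_1!\cdots m_k!\sum n_1^{\,s(m_1)}\cdots n_k^{\,s(m_k)},
\]
as claimed. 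The step that requires genuine care is the middle one: verifying that $\sigma\mapsto(t_{m_1}(\sigma),\ldots,t_{m_k}(\sigma))$ is well defined into $A$, that the summand is constant on its fibres, and that every fibre has size exactly $m_1!\cdots m_k!$; the remaining steps are routine uses of the addition and multiplication principles.
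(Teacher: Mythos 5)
Your proof is correct and follows essentially the same route as the paper: both decompose $\Sigma(X,\mathcal{P})$ according to the permutation $\chi^{(f)}$ of $I_m$ (the paper phrases this via the quotient $\Sigma(X,\mathcal{P})/\chi\cong S_m$ and its classes), count each fibre as $\prod_{i}|X_{i\sigma}|^{|X_i|}$, and then regroup the resulting sum over $S_m$ by block-size classes. The only difference is that you carry out the final regrouping explicitly, by checking that the summand depends only on the associated $k$-tuple in $A$ and that each fibre has exactly $m_1!\cdots m_k!$ permutations, whereas the paper delegates that step to a cited identity of Brualdi.
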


\begin{proof}
Since $\mathcal{P}$ is an $m$-partition of a finite set $X$, by Lemma \ref{lemma-2}, we must have $\Sigma(X,\mathcal{P})/\chi \cong S_m$. It follows that there are $m!$ distinct equivalence classes into which $\Sigma(X, \mathcal{P})$ splits under the equivalence $\chi$. Therefore, it is sufficient to calculate the cardinality of all $m!$ distinct equivalence classes under the equivalence $\chi$.

\vspace{0.2cm}
Let $[f] \in \Sigma(X,\mathcal{P})/\chi$ be an arbitrary class. We now calculate the cardinality of the class $[f]$. By Lemma \ref{fam-func}, it is sufficient to count the total number of $m$-families of block maps induced by maps of $[f]$. Note, for any $g, h \in [f]$, that $\chi^{(g)} = \chi^{(h)}$. From Corollary \ref{4-equi-sigma}, we recall that the map $\chi^{(f)}$ is a bijection on $I_m$.

\vspace{0.2cm}
Let $X_i \in \mathcal{P}$. If $i\chi^{(f)} = i'$, then  $X_i f_i = X_i f \subseteq X_{i'}$. Therefore the number of block maps from $X_i$ to $X_{i'}$ is $r_{i'}^{r_i}$, where $r_i, r_{i'} \in \{m_1\cdot n_1, \ldots, m_k\cdot n_k\}$ and $|X_i|=r_i, |X_{i'}|=r_{i'}$. Since $\mathcal{P}$ has exactly $m$ distinct blocks, by the multiplication principle, the total number of possible $m$-families $B(f, I_m)$ of block maps is
$r_{1'}^{r_1}\;r_{2'}^{r_2}\ldots r_{m'}^{r_m}$. Thus $|[f]| = r_{1'}^{r_1}\;r_{2'}^{r_2}\ldots r_{m'}^{r_m}$, where $r_i, r_{i'} \in \{m_1\cdot n_1, \ldots, m_k\cdot n_k\}$ for all $1\le i, i'\le m$.

\vspace{0.2cm}
Since $[f] \in \Sigma(X,\mathcal{P})/\chi$ is an arbitrary class, by the addition principle, we thus obtain
\begin{equation*}
\begin{split}
|\Sigma(X,\mathcal{P})|&=\sum_{[f]\in \Sigma(X,\mathcal{P})/ \chi} |[f]|
=\sum _{\phi \in S_m}r_{1'}^{r_1}r_{2'}^{r_2}\ldots r_{m'}^{r_m},
\end{split}
\end{equation*}
where $\phi \in S_m$ denotes the isomorphic image of the class $[f]$ and $i\phi = i'$. Since $\mathcal{P}$ has exactly $m_i$ blocks of size $n_i$ for each $i\in I_k$, we see that all the $r_i$'s form the multiset $\{m_1\cdot n_1, \ldots, m_k\cdot n_k\}$, and also all $r_{i'}$'s form the multiset $\{m_1\cdot n_1, \ldots, m_k\cdot n_k\}$.

\vspace{0.2cm}
Hence, by \cite[Theorem 2.4.2]{bru10}, we obtain
\begin{equation*}
\begin{split}
|\Sigma(X,\mathcal{P})|&= m_1!\ldots m_k!\sum n_1^{s(m_1)}\ldots n_k^{s(m_k)},
\end{split}
\end{equation*}
where the sum runs over all $k$-tuple of
\[A := \big\{(t_{m_1}, \ldots, t_{m_k})\;|\;\forall i\in I_k,\; t_{m_i} = (l_1, \ldots, l_{m_i}), l_{j}\in \{n_1,  \ldots, n_k\}\big\}\]
such that the components of all $t_{m_i}$'s of a $k$-tuple of $A$ form
$\{m_1\cdot n_1, \ldots, m_k\cdot n_k\}$; each $s(m_i)$ is the sum of all components of $t_{m_i}$ in a $k$-tuple of $A$.

\end{proof}

\section*{Acknowledgment}

The authors would like to thank the anonymous referee who provided critical and detailed comments on part of an earlier version of the manuscript.

\end{document}